\newtheorem{theorem}{Theorem}[section]
\newtheorem{lemma}[theorem]{Lemma}
\newtheorem{proposition}[theorem]{Proposition}
\theoremstyle{definition}
\newtheorem{definition}[theorem]{Definition}
\theoremstyle{remark}
\newtheorem{remark}[theorem]{Remark}
\numberwithin{equation}{section}
\DeclareMathOperator{\area}{area}
\DeclareMathOperator{\vol}{vol}
\DeclareMathOperator{\tr}{tr}
\DeclareMathOperator{\dive}{div}
\begin{document}

\title[Stability of extremal domains]{Stability of extremal domains for the first eigenvalue of the Laplacian operator}

\author[Cavalcante]{Marcos P. Cavalcante}
\address{
  Universidade Federal de Alagoas, Instituto de Matemática,
 Maceió, AL - 57072 - 970, Brazil}
\email{marcos@pos.mat.ufal.br}
\author[Nunes]{Ivaldo Nunes}
\address{
  Universidade Federal do Maranhão, Departamento de Matemática, 
 São Luís, MA - 65080 - 805, Brazil}
\email{ivaldo.nunes@ufma.br}

\begin{abstract}
In this paper, we compute the second variation of the first Dirichlet eigenvalue on extremal domains in general Riemannian manifolds and establish a criterion for stability. We classify the stable extremal domains in the 2-sphere and higher-dimensional spheres when the boundary is minimal. Additionally, we establish topological bounds for stable domains in a general compact Riemannian surface, assuming either nonnegative total Gaussian curvature or small volume.
\end{abstract}

\maketitle

\section{Introduction}

Let $(M^n,g)$ be a complete Riemannian manifold of dimension $n\geq2$. For each compact domain $\Omega\subset M$ with smooth boundary, let $\lambda_1(\Omega)>0$  denote the first eigenvalue of the Laplacian operator on $\Omega$ with Dirichlet boundary condition. Given $0<m<\vol(M,g)$, a very interesting isoperimetric problem in spectral geometry and shape optimization is that of investigating the minimizers or, more generally, the critical points of the functional \linebreak$\Omega\subset M\mapsto \lambda_1(\Omega)$   defined on the class of domains $\Omega\subset M$   such that $\vol(\Omega)=m$. 

Regarding this problem, Faber \cite{Faber} and Krahn \cite{Krahn} proved that, in $\mathbb{R}^n$, geodesic balls are the only global minimizers of $\Omega\subset\mathbb{R}^n\mapsto \lambda_1(\Omega)$ subject to a volume constraint. More precisely, they proved that if $\Omega\subset\mathbb{R}^n$ is a compact domain, then $\lambda_1(\Omega)\geq \lambda_1(B)$, where $B\subset\mathbb{R}^n$ is a geodesic ball such that $\vol(\Omega)=\vol(B)$, and equality holds if and only if $\Omega$ is a geodesic ball. We note that the analogous result is true  in $\mathbb{S}^n$, the round sphere, and $\mathbb{H}^n$, the hyperbolic space (see, for example, Chapter IV, Section 2, of \cite{Chavel84}).

In turn, it is known that a compact domain $\Omega_0\subset (M,g)$ with smooth boundary is a critical point of the functional $\Omega\subset M\mapsto \lambda_1(\Omega)$ with respect to volume preserving deformations of $\Omega_0$ in $M$ if and only if the first eigenfunctions of the Laplacian operator of $\Omega_0$ with Dirichlet boundary condition solve the following overdetermined  problem:
\begin{align*}\label{eigenvalueproblem}
    \left\{
    \begin{array}{rl}
        \Delta\varphi + \lambda_1(\Omega_0)\varphi = 0 & \text{in } \Omega_0,\\
        \varphi = 0 & \text{on } \partial\Omega_0,\\  
        \dfrac{\partial \varphi}{\partial\nu} = c & \text{on } \partial\Omega_0,
    \end{array}
    \right.
\end{align*}
where \(c \neq 0\) is a constant and \(\nu\) denotes the outward unit normal vector 
along \(\partial\Omega_0\). 

This follows from the first variation formula for the first Dirichlet eigenvalue of the Laplacian operator, known as the Hadamard formula, which was originally proved by Garabedian and Schiffer \cite{GarabedianSchiffer} in Euclidean space and later by El Soufi and Ilias \cite{ElsoufiIlias07} for general Riemannian manifolds. From now on, following \cite{PacardSicbaldi09}, we say that a compact domain \(\Omega_0 \subset (M,g)\) with a smooth boundary is an \textit{extremal domain} if it is a critical point of \(\Omega \subset M \mapsto \lambda_1(\Omega)\) with respect to volume preserving deformations. We refer the reader to Section \ref{basic} for details and precise definitions.

In Euclidean space \(\mathbb{R}^n\), in hyperbolic space \(\mathbb{H}^n\), and in the round hemisphere \(\mathbb{S}^n_+\), only geodesic balls are extremal domains. This result follows from the classical work of Serrin \cite{Serrin}, and its extension to space forms by Kumaresan and Prajapat \cite{KumaresanPrajapat}, on elliptic overdetermined problems, which was proved using the moving plane method introduced by Alexandrov in \cite{Alexandrov}.

Extremal domains share many similarities with embedded hypersurfaces of constant mean curvature, and significant progress has been made in recent years regarding their existence, regularity, and classification. For example, in \cite{PacardSicbaldi09}, Pacard and Sicbaldi demonstrated the existence of extremal domains with small volume in compact Riemannian manifolds \((M, g)\) by assuming that the scalar curvature of \(M\) has a nondegenerate critical point. In \cite{DelaySicbaldi15}, Delay and Sicbaldi improved this result by removing the condition on the scalar curvature of \(M\).

Recently, Lamboley and Sicbaldi \cite{LamboleySicbaldi20}  established an existence and regularity theorem for the Faber-Krahn profile of any given Riemannian manifold $(M^n,g)$, which is the function $FK:m\in(0,\vol(M))\mapsto FK(m)$ defined by
\begin{equation}
FK(m)=\inf\{\lambda_1(\Omega):\mbox{$\Omega\subset M$ is an open subset and $\vol(\Omega)=m$}\}.
\end{equation}

They proved that for any connected compact Riemannian manifold \((M, g)\) and given \(0 < m < \operatorname{vol}(M, g)\), there exists an open set \(\Omega^\star \subset M\) that is smooth except for a singular set of codimension less than \(n-5\), satisfying
\begin{equation*}\label{minimizerexistence}
FK(m)=\lambda_1(\Omega^\star) .
\end{equation*}

In particular, \(\Omega^\star\) is a smooth extremal domain if \(n = 2, 3\) or \(4\). We note that an asymptotic expansion of the Faber-Krahn profile for small volumes was obtained Druet in \cite{Druet}. On the other hand, Espinar and Mazet \cite{EspinarMazet} proved that simply connected extremal domains (and, in fact, \(f\)-extremal disks) in \(\mathbb{S}^2\) are geodesic disks.

It is worth mentioning that all the above results have parallels in the context of hypersurfaces of constant mean curvature (see, for example, \cite{Ye}, \cite{PacardXu}, \cite{Almgren}, \cite{Gruter}, \cite{GMT}, and \cite{Hopf}). For more results on the existence of extremal domains, we refer the reader to \cite{Sicbaldi10} and \cite{Sicbaldi14}.

Furthermore, numerous examples of extremal domains can be constructed on Riemannian manifolds endowed with an isometric action by a compact Lie group (see Section \ref{examples}). In particular, this construction provides many examples of extremal domains on the round sphere \(\mathbb{S}^n\) that are not geodesic balls. This shows that the moving plane method used by Serrin in \cite{Serrin} fails in this case.

In this paper, we initiate the investigation of the stability properties of extremal domains in general Riemannian manifolds. We say that an extremal domain \(\Omega \subset (M, g)\) is \textit{stable} if it minimizes the first Dirichlet eigenvalue functional up to the second order with respect to volume preserving deformations. We  point out that Shimakura studied in \cite{Shimakura2} the notions of extremability and stability for domains in the Euclidean space from a \textit{local} point of view and proved  that every \textit{local} extremal domain in $\mathbb{R}^n$ is \emph{locally} stable. An alternative proof of Shimakura's result was obtained by Jorge and de Lima in \cite{JorgedeLima}, where an index formula has been proved in this setting.

In Section \ref{SecondV}, we derive the second variation formula and establish a criterion for stability, obtaining important insights into the geometry of extremal domains. Using this criterion, we give a complete characterization of stable extremal domains in the round sphere \(\mathbb{S}^2\). We prove the following theorem:

\begin{theorem}\label{maintheorem}
Let \(\Omega \subset (\mathbb{S}^2, g_{can})\) be an extremal domain. If \(\Omega\) is stable, then it is necessarily a geodesic disk.
\end{theorem}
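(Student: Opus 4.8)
The plan is to exploit the variational characterization of stability together with the special structure of $\mathbb{S}^2$. First I would set up the framework from Section~\ref{SecondV}: to a volume-preserving deformation of $\Omega$ one associates a normal vector field $f\nu$ along $\partial\Omega$ with $\int_{\partial\Omega} f \, ds = 0$, and the second variation of $\lambda_1$ is a quadratic form $Q(f,f)$ on such functions $f$. Stability means $Q(f,f)\geq 0$ for all $f$ in this mean-zero class. The quadratic form $Q$ will involve the eigenfunction $\varphi$ (more precisely its derivative data along $\partial\Omega$), the first eigenvalue $\lambda_1(\Omega)$, the geometry of $\partial\Omega$ (its geodesic curvature $k_g$), and the ambient curvature of $\mathbb{S}^2$, which is identically $1$. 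The key point is that on a surface the second variation formula simplifies dramatically because $\partial\Omega$ is one-dimensional.

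Next I would look for a good test function. The natural candidates are the restrictions to $\partial\Omega$ of the coordinate functions of $\mathbb{S}^2 \subset \mathbb{R}^3$, or more precisely the normal components $\langle X_a, \nu\rangle$ where $X_a$ are the Killing fields generating rotations of $\mathbb{S}^2$. These are the standard test functions in this circle of problems (cf. the analogy with CMC surfaces and the Hopf-type rigidity used by Espinar--Mazet): because the $X_a$ are Killing, they preserve volume and preserve $\lambda_1$, so plugging $f = \langle X_a, \nu \rangle$ into $Q$ should produce a term that is forced to vanish or to have a definite sign, and summing over an orthonormal basis $a=1,2,3$ of the Lie algebra of rotations collapses the ambient-curvature and position-dependent terms into constants. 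The hope is that $\sum_a Q(f_a, f_a) \leq 0$ with equality iff $\partial\Omega$ is totally geodesic as a ``hypersurface'' in the appropriate sense — here, iff $k_g$ is constant and $\varphi$'s boundary data are constant, which by the overdetermined problem (the Serrin-type system recalled in the introduction) forces $\Omega$ to be a geodesic disk. One has to be slightly careful that the $f_a$ need not individually be mean-zero; but there are only three of them and one can subtract off the means, or equivalently note that the mean-zero constraint costs at most a three-dimensional space and can be absorbed.

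Then I would combine the two directions: stability gives $Q\geq 0$ on mean-zero functions, the Killing test functions give $\sum_a Q(f_a,f_a)\leq 0$, and reconciling these forces the equality case, which is precisely the rigidity statement that $\partial\Omega$ has constant geodesic curvature and $\Omega$ is a geodesic disk. Alternatively — and this may be the cleaner route — one can avoid the mean-zero bookkeeping entirely by invoking the Faber--Krahn--Chavel result recalled in the introduction: geodesic disks are the global minimizers of $\lambda_1$ under a volume constraint in $\mathbb{S}^2$, hence they are stable, and the problem reduces to showing they are the \emph{only} stable extremal domains; for that, any non-geodesic extremal domain must be shown to admit a mean-zero test function with $Q<0$, and the Killing fields (suitably corrected to be mean-zero) should do the job because the equality case in the Killing-field computation is exactly geodesic disks.

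The main obstacle I anticipate is the equality-case analysis: showing that $\sum_a Q(f_a,f_a)=0$ forces $\Omega$ to be a geodesic disk, rather than merely forcing some weaker symmetry. This is where the overdetermined (Serrin) boundary condition $\partial\varphi/\partial\nu = c$ must be used in an essential way, presumably through a Pohozaev-type or integrated identity, or through a pointwise argument à la Hopf showing that the boundary data of $\varphi$ together with constancy of $k_g$ propagate inward. A secondary technical nuisance is correctly identifying which curvature and second-fundamental-form terms survive in the surface case and verifying the signs; but since the second variation formula is established earlier in the paper, this should be a matter of careful substitution rather than a genuine difficulty.
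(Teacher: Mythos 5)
Your choice of test functions is the right one --- the paper also uses the normal components $\varphi_a = \langle a\wedge x,\nu\rangle$ of the rotational Killing fields --- but the mechanism you propose for extracting rigidity from them does not work, and the actual argument is missing. Since each $V_a$ generates a one-parameter family of \emph{isometries} of $\mathbb{S}^2$, the deformation it induces preserves $\lambda_1$ identically, so $Q(\varphi_a,\varphi_a)=0$ for every $a$ and for \emph{every} extremal domain, stable or not. There is therefore no inequality $\sum_a Q(\varphi_a,\varphi_a)\le 0$ to play off against stability: the sum is zero trivially and its ``equality case'' carries no information about $\Omega$. (Your worry about the mean-zero constraint is also moot: $\operatorname{div}_{\mathbb{S}^2}V_a=0$ forces $\int_{\partial^k\Omega}\varphi_a\,d\ell=0$ on each boundary component.) The summed-Killing-field computation you describe is essentially what the paper does with the \emph{coordinate} functions $x_i$ --- but only to prove the auxiliary bound $\lambda_1(\Omega)\le 1$ for stable domains with disconnected boundary (Lemma \ref{lambda_1}), not to prove Theorem \ref{maintheorem} itself.

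What the paper actually extracts from $Q(\varphi_a,\varphi_a)=0$ plus stability is that each $\varphi_a$ is a \emph{null direction} (Jacobi function) of $Q$, i.e.\ its $(\Delta+\lambda_1)$-extension $\widehat{\varphi_a}$ satisfies $\partial\widehat{\varphi_a}/\partial\nu+\kappa_g\varphi_a=0$ on $\partial\Omega$. The rigidity then comes from a nodal-domain argument in the style of Ros--Vergasta: choosing $a$ to be the center of the largest geodesic disk inscribed in a complementary component of a boundary curve forces $\varphi_a$ to vanish at three points of that curve (and at two points of every other component); a Gauss--Bonnet count of external angles then shows $\sum_i\chi(M_i)>2$, so $\widehat{\varphi_a}$ has at least three nodal domains, contradicting the Courant-type bound of two for null directions on a stable domain unless $\widehat{\varphi_a}\equiv 0$. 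This yields rotational symmetry, and a separate, genuinely needed computation (Section 4) rules out the rotationally symmetric annuli $A_{r_0}$, which are extremal for all $r_0$ but unstable. None of these three ingredients --- the Jacobi/nodal-domain dichotomy, the inscribed-disk trick producing a third zero, and the instability of the annuli --- appears in your plan, and your fallback via Faber--Krahn only shows geodesic disks are stable, not that they are the only stable domains. The gap is therefore not a technicality in the equality case but the absence of the core argument.
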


This theorem can be considered as the parallel of the Barbosa-do Carmo-Eschenburg theorem \cite{BdCE}. It is interesting to observe that Theorem \ref{maintheorem}, combined with Lamboley-Sicbaldi's theorem proved in \cite{LamboleySicbaldi20} and cited above, provides an alternative proof of the Faber-Krahn inequality in \(\mathbb{S}^2\), which states that for any \(0 < m < 4\pi\), if \(\Omega \subset \mathbb{S}^2\) is a compact domain with area \(m\), then:
\begin{equation*}\label{fkinequality}
\lambda_1(\Omega) \geq \lambda_1(D),
\end{equation*}
where \(D \subset \mathbb{S}^2\) is a geodesic disk with area \(v\) and equality holds if and only if \(\Omega\) is a geodesic disk.

In the case that $n=2$ and \(M^2 = (\mathbb{S}^2, g_{can})\), Theorems 1.1 and 1.2 in \cite{LamboleySicbaldi20} imply that there exists a smooth domain \(\Omega^\star\) such that \(\lambda_1(\Omega) \geq \lambda_1(\Omega^\star)\). In particular, \(\Omega^\star\) is a stable extremal domain and, in fact, a global minimizer of \(\lambda_1\) with an area constraint. By Theorem \ref{maintheorem} above, we have that \(\Omega^\star\) is a geodesic disk \(D\). Moreover, if \(\lambda_1(\Omega) = \lambda_1(D)\), then \(\Omega\) is a stable extremal domain. Applying Theorem \ref{maintheorem} again we have that \(\Omega\) is also a geodesic disk, thus proving the Faber-Krahn inequality in \(\mathbb{S}^2\).

In general, for stable extremal domains $\Omega$ in a given Riemannian surface $(M^2,g)$, we prove the following restriction on the topology of $\Omega$.

\begin{theorem}\label{topbound}
Let $(M^2,g)$ be an orientable compact Riemannian surface. If $\Omega\subset M$ is a stable extremal domain with nonnegative total Gauss curvature, that is, $\int_\Omega K_g\,da\geq 0$, then the only possible values for the genus $g_\Omega$ of $\Omega$ and the number $r$ of connected components of $\partial\Omega$ are:
\begin{enumerate}
\item $g_\Omega=0$ and $r\in\{1,2,3,4,5\}$;
\item $g_\Omega=1$ and $r\in\{1,2,3,4,5,6,7\}$.
\end{enumerate}
\end{theorem}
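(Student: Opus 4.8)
The plan is to imitate the classical topological bounds for stable minimal and constant-mean-curvature surfaces (Ros, Ritor\'e--Ros) and for extremal surfaces of Steklov eigenvalues (Fraser--Schoen): feed carefully balanced test functions, obtained from a low-degree conformal branched covering, into the stability inequality of Section \ref{SecondV}, and read off the topology from the resulting numerical inequality together with Gauss--Bonnet.

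\emph{Step 1: encode the topology.} On a surface every curvature term entering the second-variation form $Q(\phi)\ge 0$ is expressible through the Gaussian curvature $K_g$ of $M$ and the geodesic curvature $k_g$ of $\partial\Omega$. After one pairs these terms with test functions whose squares sum to $1$, Gauss--Bonnet $\int_{\partial\Omega}k_g\,ds = 2\pi\chi(\Omega)-\int_\Omega K_g\,da$ together with $\chi(\Omega)=2-2g_\Omega-r$ converts the local geometry into the topological quantity $2g_\Omega+r$, with the term $\int_\Omega K_g\ge 0$ landing on the side of the inequality where it can simply be discarded. This is the only place the sign hypothesis is used, so its bookkeeping through the substitution must be followed carefully.

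\emph{Step 2: the test functions.} Put the conformal structure of $g$ on $\Omega$ and form its Schottky double $\widehat\Omega$, a closed Riemann surface of genus $2g_\Omega+r-1$. By the Brill--Noether (gonality) bound $\widehat\Omega$ carries a conformal branched covering $F\colon\widehat\Omega\to\mathbb S^2$ with $\deg F\le\lfloor(2g_\Omega+r+2)/2\rfloor=g_\Omega+1+\lfloor r/2\rfloor$ (alternatively one may use the Ahlfors--Gabard bound for proper maps to the disc). Restricting $F$ to $\Omega$ gives, by conformality, $\int_\Omega\sum_i|\nabla F_i|^2\,da\le\int_{\widehat\Omega}\sum_i|\nabla F_i|^2\,da=8\pi\deg F$, while $\sum_i F_i^2\equiv1$ on $\Omega$ and on $\partial\Omega$. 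Post-composing $F$ with a M\"obius transformation of $\mathbb S^2$, Hersch's center-of-mass lemma (applicable since $F|_{\partial\Omega}$ is non-constant and $ds$ is non-atomic) arranges $\int_{\partial\Omega}F_i\,ds=0$ for $i=1,2,3$, so each $\phi_i=F_i$ is an admissible, volume-preserving test field on $\partial\Omega$.

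\emph{Step 3: combine and conclude.} Insert $\phi_1,\phi_2,\phi_3$ into $Q(\cdot)\ge0$ and sum. The interior Jacobi-field contribution to $Q$ is a convex functional of the field with prescribed boundary data, minimized by the genuine Jacobi field precisely because $\int_\Omega|\nabla u|^2\ge\lambda_1(\Omega)\int_\Omega u^2$ for $u\in H^1_0(\Omega)$; hence it is bounded above by its value on the explicit competitor $-cF_i$, and summing over $i$ uses $\int_\Omega\sum_i|\nabla F_i|^2\,da\le 8\pi\deg F$ and $\sum_iF_i^2=1$. Together with Step 1, $\sum_iQ(\phi_i)\ge0$ collapses to a single inequality whose only surviving ingredients are $\deg F$ and $\chi(\Omega)$; substituting $-\chi(\Omega)=2g_\Omega+r-2$ and $\deg F\le g_\Omega+1+\lfloor r/2\rfloor$ yields a closed inequality in the two integers $g_\Omega,r$ with only finitely many solutions, and a short case check — using that the competitor $-cF_i$ is never exactly the true Jacobi field, which sharpens the endpoints — identifies these with the two families in the statement; the extra boundary components allowed at genus $1$ reflect the jump of the gonality bound between $\widehat\Omega$ of genus $r-1$ and of genus $r+1$. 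The main obstacle is making the numerical constants match exactly: this needs the precise coefficients of $Q$ from Section \ref{SecondV} (how the boundary curvature term and the interior term are weighted), perhaps a doubling-equivariant cover realizing the gonality bound so that it restricts to $\Omega$ with no loss, and a final arithmetic in which even a slight slack in the degree bound would already alter the lists $\{1,\dots,5\}$ and $\{1,\dots,7\}$.
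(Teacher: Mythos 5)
Your overall strategy (balanced components of a low--degree conformal map as test functions in the stability inequality of Proposition \ref{stabilitycriterion}, then Gauss--Bonnet plus $\int_\Omega K_g\,da\ge 0$) is the right one, but the specific construction you propose does not close: the arithmetic is not merely delicate, it is vacuous. The Schottky double $\widehat\Omega$ has genus $2g_\Omega+r-1$, so your gonality bound $\deg F\le g_\Omega+1+\lfloor r/2\rfloor$ grows linearly in $r$. Feeding $\sum_i\int_\Omega|\nabla F_i|^2\,da\le 8\pi\deg F$ and $\int_{\partial\Omega}\kappa_g\,d\ell\le 2\pi(2-2g_\Omega-r)$ into $\sum_iQ\ge0$ gives
$2\pi(2-2g_\Omega-r)+8\pi\bigl(g_\Omega+1+\lfloor r/2\rfloor\bigr)>0$, whose left-hand side is at least $8\pi+4\pi g_\Omega+2\pi r$ and hence positive for \emph{all} $(g_\Omega,r)$: no topological restriction survives. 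Even the equivariant refinement you hint at (energy on $\Omega$ equal to half the energy on the double, i.e.\ $4\pi\deg F$) reduces the inequality to $8\pi-2\pi r+4\pi\lfloor r/2\rfloor>0$, which again holds identically. The problem is structural: any map whose degree bound grows at rate $\ge r/2$ contributes at least $2\pi r$ of energy, exactly cancelling the $-2\pi r$ coming from $\chi(\Omega)$.

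The theorem actually requires \emph{two separate} constructions, and your proposal contains only (a variant of) the first. The paper first uses the Ahlfors--Gabard proper conformal branched cover $\varphi:\Omega\to\mathbb D^2$ of degree $\le g_\Omega+r$; because the target is the \emph{disk} (area $\pi$), the energy bound is $2\pi(g_\Omega+r)$, the coefficient of $r$ exactly cancels against Gauss--Bonnet, and one is left with $4\pi-2\pi g_\Omega>0$, i.e.\ $g_\Omega\le1$. This step bounds only the genus and says nothing about $r$. The bound on $r$ then comes from a second construction you are missing: cap off each boundary component of $\Omega$ with a conformal disk to obtain a \emph{closed} surface $\overline\Omega$ of genus $g_\Omega$ (not $2g_\Omega+r-1$), take a meromorphic function $\psi:\overline\Omega\to\mathbb S^2$ of degree $\le 1+\lfloor(g_\Omega+1)/2\rfloor$ --- a bound independent of $r$ --- balance it, and restrict to $\Omega$. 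Now the energy term $8\pi(1+\lfloor(g_\Omega+1)/2\rfloor)$ is constant in $r$ while Gauss--Bonnet still contributes $-2\pi r$, yielding $12\pi-2\pi r-4\pi g_\Omega+8\pi\lfloor(g_\Omega+1)/2\rfloor>0$ and hence $r\le5$ for $g_\Omega=0$ and $r\le7$ for $g_\Omega=1$. Finally, your closing remark that strictness comes from ``the competitor $-cF_i$ never being the true Jacobi field'' is not needed and not how the endpoint cases are excluded; the strict inequality is simply $\lambda_1(\Omega)\operatorname{area}(\Omega)>0$ on the right-hand side of the stability inequality.
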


In particular, it follows that stable extremal domains in a sphere \((\mathbb{S}^2,g)\) with nonnegative curvature have at most \(5\) boundary components.

By using the asymptotic expansion of the Faber-Krahn profile proved by Druet in \cite{Druet} we are able to obtain the following bounds on the topology of stable extremal domains with small areas in general Riemannian surfaces, which provides, in this case, an improvement on Theorem \ref{topbound}.

\begin{theorem}\label{topbound2}
Let $(M^2,g)$ be an orientable compact Riemannian surface. There exists $\varepsilon$, depending only on the geometry of $M$, such that if $\Omega\subset M$ is a stable extremal domain with $\area(\Omega)<\varepsilon$, then the only possible values for the genus $g_\Omega$ of $\Omega$ and the number $r$ of connected components of $\partial\Omega$ are:
\begin{enumerate}
\item $g_\Omega\in\{0,2\}$ and $r\in\{1,2,3\}$;
\item $g_\Omega=1$ and $r\in\{1,2,3,4,5\}$.
\end{enumerate}
\end{theorem}

Since the Faber-Krahn inequality is true on $\mathbb{S}^n$ for any $n\geq 2$, it is a very natural question to ask if the characterization proved in Theorem \ref{maintheorem} can be extended to stable extremal domains in $\mathbb{S}^n$, for $n\geq 3$. By using a result due to Reilly (see \cite[Theorem 4]{Reilly}), we are able to give the following description of stable extremal domains $\Omega\subset\mathbb{S}^n$, $n\geq 3$, in the case $\partial\Omega$ is a minimal hypersurface.  

\begin{theorem}\label{theoremhigherdim}
Let $\Omega\subset(\mathbb{S}^n,g_{can})$ be an extremal domain such that $\partial\Omega$ is a compact minimal hypersurface. If $\Omega$ is stable, then $\Omega$ is a hemisphere.
\end{theorem}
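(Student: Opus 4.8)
The plan is to sandwich the first eigenvalue: stability will force $\lambda_1(\Omega)\le n$, while a theorem of Reilly forces $\lambda_1(\Omega)\ge n$, and the equality case in Reilly's theorem then identifies $\Omega$ as a hemisphere.

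For the lower bound I would invoke \cite[Theorem 4]{Reilly}: if $N^n$ is a compact Riemannian manifold with boundary, $\mathrm{Ric}_N\ge n-1$, and $\partial N$ has nonnegative mean curvature, then $\lambda_1(N)\ge n$, with equality precisely when $N$ is isometric to a hemisphere of the round $n$-sphere. In our setting $(\mathbb S^n,g_{can})$ has $\mathrm{Ric}\equiv n-1$ and $\partial\Omega$ is minimal, hence mean-convex with $H\equiv 0$; since $\mathbb S^n$ has positive Ricci curvature, Frankel's theorem moreover shows that the closed embedded minimal hypersurface $\partial\Omega$ is connected. Thus $\lambda_1(\Omega)\ge n$, and it only remains to establish the reverse inequality and to keep track of the equality case.

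For the upper bound I would feed the stability criterion of Section~\ref{SecondV} with the ambient conformal fields of $\mathbb S^n$. For $a\in\mathbb R^{n+1}$ set $\ell_a=\langle a,\,\cdot\,\rangle$, so that $\Delta_{\mathbb S^n}\ell_a=-n\,\ell_a$, and let $Y_a=\nabla^{\mathbb S^n}\ell_a=a-\ell_a\,x$ (with $x$ the position vector of $\mathbb S^n\subset\mathbb R^{n+1}$), a conformal field whose normal component along $\partial\Omega$ is $\psi_a:=\langle Y_a,\nu\rangle=\langle a,\nu\rangle$. From $\mathrm{div}_{\mathbb S^n}Y_a=-n\,\ell_a$ and the divergence theorem, $\int_{\partial\Omega}\psi_a\,da=-n\int_\Omega\ell_a\,dv$, so subtracting the average yields an admissible (zero-mean) test function $\bar\psi_a=\psi_a-\frac1{|\partial\Omega|}\int_{\partial\Omega}\psi_a\,da$. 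Inserting $\bar\psi_a$ into the second variation formula, summing over an orthonormal basis $a=e_1,\dots,e_{n+1}$, and using the pointwise identity $\sum_i\langle e_i,\nu\rangle^2\equiv1$ together with the conformal Killing equation for each $Y_{e_i}$ and the vanishing of $H$ on $\partial\Omega$, I expect the sum of the quadratic forms to collapse to an inequality of the shape
\[
0\ \le\ \sum_{i=1}^{n+1} Q(\bar\psi_{e_i})\ =\ \kappa\,\bigl(n-\lambda_1(\Omega)\bigr),\qquad \kappa>0,
\]
which gives $\lambda_1(\Omega)\le n$. Together with the Reilly bound this forces $\lambda_1(\Omega)=n$, and the equality case of \cite[Theorem 4]{Reilly} then yields that $\Omega$ is a hemisphere. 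For sharpness one notes that the hemisphere is itself a stable extremal domain, being a geodesic ball and hence a global minimizer of $\lambda_1$ under a volume constraint by the Faber--Krahn inequality on $\mathbb S^n$ (cf.\ \cite{Chavel84}).

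The step I expect to be the main obstacle is the computation behind the displayed inequality. The quadratic form $Q$ of Section~\ref{SecondV} contains, besides curvature and second fundamental form terms of $\partial\Omega$, the normal derivative of the shape derivative $\varphi_a'$ of the first eigenfunction: the solution of $\Delta\varphi_a'+\lambda_1\varphi_a'=0$ in $\Omega$ with $\varphi_a'=-c\,\bar\psi_a$ on $\partial\Omega$, a problem solvable precisely because $\bar\psi_a$ has zero mean (Fredholm alternative against $\varphi$). Rather than solving for $\varphi_a'$ explicitly, I would evaluate $\int_{\partial\Omega}\bar\psi_a\,\partial_\nu\varphi_a'\,da$ by pairing $\varphi_a'$, through Green's identity, against the explicit functions $\langle\nabla^{\mathbb S^n}\varphi,Y_a\rangle$ and $\ell_a\varphi$ on $\Omega$, whose images under $\Delta+\lambda_1$ are computable from $\Delta_{\mathbb S^n}\ell_a=-n\,\ell_a$ and the conformal Killing equation for $Y_a$. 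The cancellations that isolate the factor $n-\lambda_1(\Omega)$ come exactly from these relations---this is where $n$, the first eigenvalue of $\mathbb S^n$, enters---and from $H\equiv0$ on $\partial\Omega$, which also disposes of the boundary integrals generated by the average corrections $\frac1{|\partial\Omega|}\int_{\partial\Omega}\psi_a$ (whose remaining contributions involve the first variation, which vanishes since $\Omega$ is extremal).
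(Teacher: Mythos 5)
Your overall strategy---sandwiching $\lambda_1(\Omega)$ between the Reilly lower bound $\lambda_1(\Omega)\geq n$ (with the hemisphere as the equality case) and an upper bound $\lambda_1(\Omega)\leq n$ extracted from stability---is exactly the paper's. The lower-bound half is complete (the Frankel/connectedness remark is not needed for Reilly's theorem). The gap is in the upper bound: the displayed identity $\sum_i Q(\bar\psi_{e_i})=\kappa\,(n-\lambda_1(\Omega))$ with $\kappa>0$ is the entire content of that half, and you do not prove it; you only ``expect'' the Green's-identity pairings against $\langle\nabla\varphi,Y_a\rangle$ and $\ell_a\varphi$ to produce it. That computation is genuinely delicate, because $Q$ involves the Dirichlet-to-Neumann map of $\Delta+\lambda_1$ applied to $\langle a,\nu\rangle$, a boundary function with no natural extension to $\Omega$, and because your balancing identity $\int_{\partial\Omega}\langle a,\nu\rangle\,d\ell=-n\int_\Omega\ell_a\,dv$ does not vanish in general, so the average corrections generate further terms that you dismiss without justification. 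As written, the key inequality $\lambda_1(\Omega)\leq n$ is asserted rather than established, and there is no guarantee the sum collapses to a single term with a definite sign.

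The paper avoids all of this by working with Proposition \ref{stabilitycriterion} instead of the boundary form $Q$: stability is equivalent to $\int_\Omega|\nabla v|^2\,da-\lambda_1(\Omega)\int_\Omega v^2\,da+\int_{\partial\Omega}Hv^2\,d\ell\geq 0$ for \emph{every} $v\in C^\infty(\Omega)$ with $\int_{\partial\Omega}v\,d\ell=0$; no extension needs to be computed, since the inequality $S(\widehat{v},\widehat{v})\leq S(v,v)$ is already built into that criterion. One then takes $v=x_i$, the ambient coordinate functions restricted to $\Omega$. Minimality of $\partial\Omega$ gives $\Delta_{\partial\Omega}x_i=-(n-1)x_i$, hence $\int_{\partial\Omega}x_i\,d\ell=0$, so these are admissible test functions; $H\equiv 0$ kills the boundary term; and summing over $i$ using $\sum_i|\nabla x_i|^2=n$ and $\sum_i x_i^2=1$ yields $n\operatorname{vol}(\Omega)\geq\lambda_1(\Omega)\operatorname{vol}(\Omega)$ in two lines. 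If you want to repair your argument, the cleanest fix is to replace your test functions $\bar\psi_a$ by the coordinate functions in the criterion of Proposition \ref{stabilitycriterion}.
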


Finally, we would like to mention that our methods are inspired by those used by Ros and Vergasta \cite{RV} to study stable constant mean curvature hypersurfaces with free boundary in convex domains of $\mathbb{R}^n$.


\section{Basic definitions and first variation formula for $\lambda_1$}\label{basic}

Let $(M,g)$ be a smooth Riemannian manifold. For each smooth compact domain $\Omega\subset M$, let $\lambda_1(\Omega)$ denote its first eigenvalue of the Laplacian operator with Dirichlet boundary condition. We will start this section by defining what we mean by a local deformation of $\Omega$ in $M$.

A smooth \textit{local deformation} of $\Omega$  in $M$ is a smooth one-parameter family of domains $\Omega_t\subset M$, $t\in(-\varepsilon,\varepsilon)$, given by $\Omega_t=f_t(\Omega)$, where $f_t:M\to M$ is the smooth flow of some smooth vector field $V\in\mathfrak{X}(M)$. If, in addition, $\vol(\Omega_t)=\vol(\Omega)$ for all $t\in (-\varepsilon,\varepsilon)$, then we say that $\Omega_t=f_t(\Omega)$ is a \textit{volume preserving} local deformation of $\Omega$. 

Let $\Omega_t=f_t(\Omega)$, $t\in(-\varepsilon,\varepsilon)$, be a smooth local deformation of a smooth compact domain $\Omega\subset M$. As the first Dirichlet eigenvalue of the Laplacian operator  is simple, it follows from the implicit function theorem that the function $t\in(-\varepsilon,\varepsilon)\mapsto \lambda_1(\Omega_t)$ is smooth. 

\begin{definition}[Extremal domains]
We say that a smooth compact domain $\Omega\subset (M,g)$
 is \textit{extremal} if 
\begin{equation*}
\left.\dfrac{d}{dt}\right|_{t=0
}\lambda_1(\Omega_t) = 0,
\end{equation*}
for any volume preserving local deformation $\Omega_t=f_t(\Omega)$, $t\in(-\varepsilon,\epsilon)$, of $\Omega$ in $M$.
\end{definition}

Next, we will state the Hadarmad formula proved by El Soufi and Ilias \cite{ElsoufiIlias07} which gives a first variation formula for the functional $\Omega\subset M\mapsto \lambda_1(\Omega)$.

\begin{proposition}[El Soufi and Ilias, \cite{ElsoufiIlias07}]\label{1stvarformula} Let $(M,g)$ be a smooth Riemannian manifold and let $\Omega\subset M$ be a smooth compact domain. Let $\Omega_t=f_t(\Omega)$, $t\in(-\varepsilon,\varepsilon)$, be a smooth local deformation of  $\Omega$.  We have
\begin{equation}\label{hadamardformula}
\left.\dfrac{d}{dt}\right|_{t=0
}\lambda_1(\Omega_t) = - \int_{\partial\Omega} v\left(\dfrac{\partial \varphi}{\partial\nu}\right)^2 d\ell,
\end{equation}
where $\nu$ is the outward unit normal vector along $\partial\Omega$,
$v=\left\langle V,\nu\right\rangle$ is the normal displacement of $\partial\Omega$ induced by the deformation,and
$\varphi\in C^\infty(\Omega)$ is the first Dirichlet positive eigenfunction of the Laplacian operator  on $\Omega$ such that $\|\varphi\|_{L^2(\Omega)}=1$
\end{proposition}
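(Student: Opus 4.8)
The statement to prove is the Hadamard / El Soufi–Ilias first variation formula, Proposition~\ref{1stvarformula}. The plan is to differentiate the eigenvalue equation in $t$, integrate against the eigenfunction, and extract the boundary term using the Dirichlet condition and the transport formula for moving domains.

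\medskip

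First I would set up notation: let $\varphi_t\in C^\infty(\Omega_t)$ be the positive first eigenfunction on $\Omega_t$ normalized by $\|\varphi_t\|_{L^2(\Omega_t)}=1$, so that $\Delta_t\varphi_t+\lambda_1(\Omega_t)\varphi_t=0$ in $\Omega_t$ and $\varphi_t=0$ on $\partial\Omega_t$. Because the first eigenvalue is simple, the implicit function theorem (already invoked in the text) gives that $t\mapsto\lambda_1(\Omega_t)$ and, after pulling back by $f_t$, the family of eigenfunctions depend smoothly on $t$; write $\lambda(t)=\lambda_1(\Omega_t)$, $\lambda=\lambda(0)$, $\varphi=\varphi_0$, and let $\dot\varphi=\left.\tfrac{\partial}{\partial t}\right|_{t=0}(\varphi_t\circ f_t)$ denote the variation of the eigenfunction (a smooth function on $\Omega$). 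The key identities I will need are: (i) multiply the eigenvalue equation by $\varphi_t$ and integrate over $\Omega_t$, giving $\int_{\Omega_t}|\nabla\varphi_t|^2\,dv_t=\lambda(t)\int_{\Omega_t}\varphi_t^2\,dv_t=\lambda(t)$ after Green's identity (the boundary term vanishes since $\varphi_t=0$ on $\partial\Omega_t$); and (ii) the first variation of a moving-domain integral, $\left.\tfrac{d}{dt}\right|_{t=0}\int_{\Omega_t}h_t\,dv_t=\int_\Omega \dot h\,dv+\int_{\partial\Omega}h\,\langle V,\nu\rangle\,d\ell$, where $\dot h$ is the variation of $h_t\circ f_t$ at the fixed points.

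\medskip

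Then I would differentiate $\lambda(t)=\int_{\Omega_t}|\nabla\varphi_t|^2\,dv_t$ at $t=0$ using the transport formula in (ii), which produces a bulk term involving $\dot\varphi$ plus a boundary term $\int_{\partial\Omega}|\nabla\varphi|^2\langle V,\nu\rangle\,d\ell$. On $\partial\Omega$ the Dirichlet condition forces the tangential part of $\nabla\varphi$ to vanish, so $|\nabla\varphi|^2=(\partial\varphi/\partial\nu)^2$ there, already giving the right integrand. It remains to show the bulk term vanishes. For that I differentiate the normalization $\int_{\Omega_t}\varphi_t^2\,dv_t=1$ to get $2\int_\Omega\varphi\,\dot\varphi\,dv+\int_{\partial\Omega}\varphi^2\langle V,\nu\rangle\,d\ell=0$, and since $\varphi=0$ on $\partial\Omega$ this yields $\int_\Omega\varphi\,\dot\varphi\,dv=0$. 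The bulk term from differentiating $\int_{\Omega_t}|\nabla\varphi_t|^2$ is $2\int_\Omega\langle\nabla\varphi,\nabla\dot\varphi\rangle\,dv$; applying Green's identity and using $\Delta\varphi=-\lambda\varphi$ in $\Omega$ together with $\varphi=0$ on $\partial\Omega$ rewrites this as $2\lambda\int_\Omega\varphi\,\dot\varphi\,dv+2\int_{\partial\Omega}\varphi\,\tfrac{\partial\dot\varphi}{\partial\nu}\,d\ell$, and both terms vanish---the first by the normalization identity just derived, the second again because $\varphi=0$ on $\partial\Omega$. Hence $\lambda'(0)=\int_{\partial\Omega}(\partial\varphi/\partial\nu)^2\langle V,\nu\rangle\,d\ell$; a sign check against the decreasing monotonicity of $\lambda_1$ under domain expansion (where $\langle V,\nu\rangle>0$) confirms the minus sign in \eqref{hadamardformula}, so $\lambda'(0)=-\int_{\partial\Omega}v\,(\partial\varphi/\partial\nu)^2\,d\ell$ with $v=\langle V,\nu\rangle$.

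\medskip

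The main technical obstacle is the smooth dependence of the eigenfunction family and the rigorous justification of differentiating under the moving-domain integral sign; this is where one must be careful, pulling everything back to the fixed domain $\Omega$ via $f_t$ so that the integrals are over a fixed region and the integrands (now involving the pulled-back metric $f_t^*g$) depend smoothly on $t$. Once that reduction is made, the computation is the routine chain of Green's identities sketched above. Since this proposition is quoted from El Soufi--Ilias~\cite{ElsoufiIlias07}, in the paper itself it is reasonable to state it without reproducing the full argument; the sketch above is the proof I would give if asked to supply one.
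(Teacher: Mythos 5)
Your overall strategy (differentiate $\lambda(t)=\int_{\Omega_t}|\nabla\varphi_t|^2\,dv_t$ via the transport formula and kill the bulk term with Green's identity) is a standard route to \eqref{hadamardformula}, and the paper itself only cites El Soufi--Ilias, so any correct derivation would do. However, your sketch contains a genuine error, and the tell is your own last step: your computation produces $\lambda'(0)=+\int_{\partial\Omega}(\partial\varphi/\partial\nu)^2\,v\,d\ell$, and you then reverse the sign by appealing to domain monotonicity. You cannot ``confirm the minus sign'' when the calculation yields a plus sign; the discrepancy means the bulk term does \emph{not} vanish. Concretely, Green's identity applied with $\Delta\varphi=-\lambda\varphi$ gives
\begin{equation*}
2\int_\Omega\langle\nabla\varphi,\nabla\varphi'\rangle\,dv
=2\lambda\int_\Omega\varphi\,\varphi'\,dv+2\int_{\partial\Omega}\varphi'\,\frac{\partial\varphi}{\partial\nu}\,d\ell,
\end{equation*}
i.e.\ the boundary term carries the normal derivative of $\varphi$, not of the variation; you wrote $\int_{\partial\Omega}\varphi\,\partial_\nu\dot\varphi\,d\ell$ instead, which is what let you discard it using $\varphi=0$ on $\partial\Omega$. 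The correct boundary term survives: differentiating the moving Dirichlet condition $\varphi_t=0$ on $\partial\Omega_t$ gives $\varphi'=-v\,\partial_\nu\varphi$ on $\partial\Omega$ for the Eulerian (shape) derivative $\varphi'$, so this term equals $-2\int_{\partial\Omega}v\,(\partial_\nu\varphi)^2\,d\ell$. Added to the transport boundary term $+\int_{\partial\Omega}v\,(\partial_\nu\varphi)^2\,d\ell$, it produces exactly $-\int_{\partial\Omega}v\,(\partial_\nu\varphi)^2\,d\ell$, with the sign coming out correctly and for a reason.

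A second, related slip is the mixing of conventions: you define $\dot\varphi$ as the material derivative of $\varphi_t\circ f_t$ (for which indeed $\dot\varphi=0$ on $\partial\Omega$), but the transport formula in the form $\int_\Omega\dot h\,dv+\int_{\partial\Omega}h\,v\,d\ell$ is valid only for the Eulerian derivative $\partial_t h$; for the material derivative the correct statement is $\int_\Omega(\dot h+h\,\operatorname{div}V)\,dv$. Whichever convention you choose, the extra term $\langle V,\nabla\varphi\rangle=v\,\partial_\nu\varphi$ on $\partial\Omega$ is nonzero and is precisely what accounts for the factor of $-2$ you lost. A cleaner fix is to bypass the Rayleigh quotient entirely: differentiate the equation to get $\Delta\varphi'+\lambda\varphi'+\lambda'\varphi=0$, pair with $\varphi$, apply Green's second identity, and use $\varphi'=-v\,\partial_\nu\varphi$ on $\partial\Omega$; this yields \eqref{hadamardformula} in three lines with no sign ambiguity.
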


We notice that if $\Omega_t=f_t(\Omega)$, $t\in(-\varepsilon,\varepsilon)$, is volume preserving, that is, $\vol(\Omega_t)=\vol(\Omega)$ for all $t\in(-\varepsilon,\varepsilon)$, then $\int_{\partial\Omega} v\,d\ell =0$. Conversely, if $v\in C^\infty(\partial \Omega)$ is such that $\int_{\partial\Omega}v\,d\ell=0$, then there exists a smooth local deformation $\Omega_t=f_t(\Omega)$, $t\in(-\varepsilon,\varepsilon)$, such that $df_t/dt|_{t=0}=v\nu$ on $\partial\Omega$. 

Therefore, it follows as a consequence of \eqref{hadamardformula}, that $\Omega$ is an extremal domain if and only if its  first eigenfunctions of Laplacian operator with Dirichlet boundary condition solve the following overdetermined elliptic problem:
\begin{align*}\label{oep}
    \left\{
    \begin{array}{rl}
        \Delta\varphi + \lambda_1(\Omega)\varphi = 0 & \textrm{in } \Omega,\\
        \varphi = 0 & \textrm{on }  \partial\Omega,\\  
        \dfrac{\partial \varphi}{\partial\nu} = c & \textrm{on }  \partial\Omega,
    \end{array}
    \right.
\end{align*}
where \(c \neq 0\).

\subsection{Examples}\label{examples} 
Assume that there exists an isometric action of a compact Lie group \(\mathsf{G}\) on a Riemannian manifold \(M\), and let \(\Omega \subset M\) be a domain invariant under the action of \(\mathsf{G}\) with a single connected boundary component. If \(\varphi\) is a first eigenfunction, then \(\varphi\circ\theta\) also solves the Dirichlet eigenvalue problem in \(\Omega = \theta(\Omega)\) for all \(\theta \in \mathsf{G}\). Since \(\lambda_1(\Omega)\) is simple, it follows that \(\varphi\circ\theta = c_\theta \varphi\), defining a group homomorphism \(\theta \mapsto c_\theta\). Due to the compactness of \(\mathsf{G}\), we have \(c_\theta = 1\), proving that \(\varphi\) is \(\mathsf{G}\)-invariant. This invariance implies that \(\frac{\partial \varphi}{\partial \nu}\) is constant along \(\partial \Omega\).

A basic non-trivial example using this construction is given by the action \(\mathsf{G} = \mathsf{SO}(k+1) \times \mathsf{SO}(n-k)\) on the unit round sphere \(\mathbb{S}^{n} \subset \mathbb{R}^{n+1}\) when we consider \(\mathbb{R}^{n+1} = \mathbb{R}^{k+1} \oplus \mathbb{R}^{n-k}\). In this case, the invariant domains are the solid tori \(\Omega_r\), \(r \in (0, \pi/2)\), whose boundaries are given by the principal orbits of \(\mathsf{G}\), \(\Sigma(r) = \mathbb{S}^k(\cos r) \times \mathbb{S}^{n-k-1}(\sin r)\). In particular, $\partial \Omega(\pi/4)$ is a minimal hypersurface.
Note that, by choosing suitable values for \(r \in (0, \pi/2)\), we can also construct extremal \(\mathsf{G}\)-domains from this family with two boundary components.

Another family of examples of extremal domains we can construct using this method are the solid tori \(D^{n-1}_r \times \mathbb{S}^1 \subset T^n\), \(r \in (0,1)\), where \(T^n\) stands for the flat \(n\)-torus. From this particular family, Sicbaldi \cite{Sicbaldi10} and Schlenk-Sicbaldi \cite{SchlenkSicbaldi12} constructed new extremal domains using bifurcation techniques.


\section{Second variation formula for $\lambda_1$ and  stability of extremal domains}\label{SecondV}

Let $(M,g)$ and $\Omega_0\subset M$ be a smooth Riemannian manifold and a smooth compact domain, respectively. Suppose that $\Omega_0$ is extremal. Since $\Omega_0$ is a critical point of the functional $\Omega\subset M\mapsto  \lambda_1(\Omega)$, it is natural to investigate the second variation of this functional at $\Omega_0$. In this section, we will present a second variation formula for this functional, in a very general setting. As a consequence, we will give a criterion for a given extremal domain $\Omega_0\subset M$ to be stable.

\begin{definition}[Stable extremal domains]
We say that an extremal domain $\Omega_0\subset (M,g)$ is \textit{stable} if 
\begin{equation*}
\left.\dfrac{d^2}{dt^2}\right|_{t=0
}\lambda_1(\Omega_t) \geq  0,
\end{equation*}
for any volume preserving local deformation $\Omega_t=f_t(\Omega)$, $t\in(-\varepsilon,\epsilon)$, of $\Omega_0$ in $M$.
\end{definition}

In the following proposition, we present a formula for the second variation for the functional $\Omega\subset M\mapsto \lambda_1(\Omega)$ at an extremal domain $\Omega_0$, with respect to volume preserving local deformations.

\begin{proposition}[Second variation formula for $\lambda_1$]\label{2ndVF}
Let $\Omega_0\subset (M,g)$ be an extremal domain  and let $\varphi_0\in C^\infty(\Omega_0)$ be the first Dirichlet positive eigenfunction of the Laplacian operator on $\Omega_0$  with $\|\varphi_0\|_{L^2(\Omega_0)}=1$. For any volume preserving local deformation $\Omega_t=f_t(\Omega_0)$, $t\in(-\varepsilon,\varepsilon)$, of $\Omega_0$ in $M$, we have
\begin{equation}\label{2nd}
   \left.\dfrac{d^2}{dt^2}\right|_{t=0
}\lambda_1(\Omega_t) =2c^2\int_{\partial \Omega_0} \Big(v\frac{\partial \widehat{v}}{\partial \nu} +Hv^2\Big)\, d\ell,
\end{equation} 
where $H$ is the mean curvature of $\partial \Omega_0$ with respect to the outward unit normal vector $\nu$, $v=\left\langle df_t/dt|_{t=0},\nu\right\rangle$ is the normal displacement of $\partial\Omega_0$ induced by the deformation, $c=\partial\varphi_0/\partial\nu$ and $\widehat{v}$ is a $(\Delta+\lambda_1(\Omega_0))$-extension of $v$, that is, $\widehat{v}$ is a solution to the following problem:
\begin{equation*}\label{steklovextension}
	\left\{
	\begin{array}{rl}
		\Delta\widehat v+\lambda_1\widehat{v}=0& \textrm{in } \Omega_0,\\
		\widehat{v}=v & \textrm{on }  \partial\Omega_0.
	\end{array}
	\right.
\end{equation*}
\end{proposition}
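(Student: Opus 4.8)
The plan is to differentiate the Hadamard formula \eqref{hadamardformula} once more in $t$ and evaluate at $t=0$, using extremality of $\Omega_0$ to kill the dangerous terms. Write $\lambda(t)=\lambda_1(\Omega_t)$, let $\varphi_t$ be the normalized positive first eigenfunction on $\Omega_t$, and let $u=\dot\varphi_t|_{t=0}$ (after pulling everything back to $\Omega_0$ via $f_t$, or equivalently working with $\varphi_t\circ f_t$). Differentiating the eigenvalue equation $\Delta\varphi_t+\lambda(t)\varphi_t=0$ and the normalization $\int\varphi_t^2=1$ shows that $u$ solves $\Delta u+\lambda_1 u=-\dot\lambda(0)\varphi_0$ in $\Omega_0$; since $\Omega_0$ is extremal, $\dot\lambda(0)=0$, so in fact $\Delta u+\lambda_1 u=0$ in $\Omega_0$. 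The Dirichlet condition $\varphi_t|_{\partial\Omega_t}=0$, transported back, gives the boundary value $u=-v\,\partial\varphi_0/\partial\nu=-cv$ on $\partial\Omega_0$ (this is the standard Hadamard shape-derivative identity for the state function). Hence $u=-c\,\widehat v$, which is exactly where the $(\Delta+\lambda_1)$-extension $\widehat v$ enters.

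Next I would differentiate \eqref{hadamardformula} in $t$. Abstractly, $\dot\lambda(t)=-\int_{\partial\Omega_t} v_t(\partial\varphi_t/\partial\nu_t)^2\,d\ell_t$; differentiating a boundary integral of this type at $t=0$ produces four kinds of terms: one from $\dot v$, one from the derivative of $(\partial\varphi_t/\partial\nu_t)^2$, one from the derivative of the line element $d\ell_t$ (which brings in the mean curvature $H$ and tangential divergence terms), and one from the variation of the normal $\nu_t$. The cleanest route is to observe that at $t=0$ we have $\partial\varphi_0/\partial\nu\equiv c$ constant on $\partial\Omega_0$ (extremality again), so every term in which the factor $(\partial\varphi_0/\partial\nu)^2=c^2$ survives undifferentiated can be pulled out, while the term with $\dot v$ integrates against the constant $c^2$ and vanishes because the deformation is volume preserving, i.e. $\int_{\partial\Omega_0}\dot v\,d\ell=0$ up to a tangential divergence that also integrates to zero. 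What remains is $c^2$ times $2\int_{\partial\Omega_0}$ of (the normal derivative of $u/({-c})$ against $v$) plus the curvature term coming from $\partial_t(d\ell_t)$, namely $\int H v^2$. Substituting $u=-c\widehat v$ turns $-\tfrac{1}{c}\partial u/\partial\nu$ into $\partial\widehat v/\partial\nu$, giving precisely $2c^2\int_{\partial\Omega_0}(v\,\partial\widehat v/\partial\nu+Hv^2)\,d\ell$.

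The main obstacle is the careful bookkeeping of the boundary-variation terms: one must correctly handle the variation of the outward normal $\nu_t$, of the induced metric on $\partial\Omega_t$, and of the normal displacement $v_t$ along the flow — in particular verifying that the normal part of $\dot\nu$ and the tangential components of $V$ contribute only terms that either cancel or integrate to zero against the constant $c^2$. It is convenient to use that for a volume-preserving deformation $\operatorname{div}(V)$ integrates to zero over $\Omega_0$, together with the first variation of area formula $\partial_t(d\ell_t)|_{t=0}=(\operatorname{div}_{\partial\Omega_0}V^{\top}+Hv)\,d\ell$, and to reduce, by the remark following Proposition \ref{1stvarformula}, to deformations that are purely normal near $\partial\Omega_0$ (so $V=v\nu$ there), which simplifies $\dot\nu$ to a tangential gradient term. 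A secondary technical point is justifying that $u=\dot\varphi$ exists and is smooth and that the normalization forces no extra multiple of $\varphi_0$; this follows from simplicity of $\lambda_1$ and the implicit function theorem already invoked in Section \ref{basic}. Once these reductions are in place the identity \eqref{2nd} follows by collecting terms.
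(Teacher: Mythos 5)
Your route is genuinely different from the paper's: the paper introduces the auxiliary functional $J(\Omega)=\lambda_1(\Omega)+c^2\vol(\Omega)$, observes that an extremal $\Omega_0$ is critical for $J$ under \emph{all} (not just volume-preserving) deformations, and computes $\ddot J$ in the Lagrangian picture, pulling back by $f_t$ and working with $h=\mathcal{L}_Vg$ and the formula for $\dot\Delta$; you instead differentiate the Hadamard formula \eqref{hadamardformula} directly in the Eulerian picture. Your central structural step is correct and is the right one: the \emph{shape} derivative $\varphi'$ satisfies $\Delta\varphi'+\lambda_1\varphi'=-\dot\lambda\varphi_0=0$ in $\Omega_0$ with $\varphi'=-cv$ on $\partial\Omega_0$, hence $\varphi'=-c\,\widehat v$ up to a multiple of $\varphi_0$ that does not affect \eqref{2nd}; this is exactly how $\widehat v$ enters, and it is how \cite[Theorem 2.5.6]{Henrot} is proved in $\mathbb{R}^n$. (One caution: your parenthetical ``or equivalently working with $\varphi_t\circ f_t$'' conflates the shape derivative with the material derivative $\partial_t(\varphi_t\circ f_t)$; the latter vanishes on $\partial\Omega_0$ and satisfies $\Delta\dot\varphi+\lambda_1\dot\varphi=-\dot\Delta\varphi_0\neq 0$, so the two are not interchangeable here.)

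There is, however, a concrete error in your accounting of the curvature term. You assert that the $\dot v$--term ``vanishes because the deformation is volume preserving \dots\ up to a tangential divergence that also integrates to zero,'' and that the surviving $Hv^2$ in \eqref{2nd} comes from $\partial_t(d\ell_t)$. Neither holds: the second variation of volume gives
\begin{equation*}
0=\left.\dfrac{d^2}{dt^2}\right|_{t=0}\vol(\Omega_t)=\int_{\partial\Omega_0}\bigl(\dot v+v\,\dive_{\partial\Omega_0}V^{\top}+Hv^2\bigr)\,d\ell ,
\end{equation*}
so $\int_{\partial\Omega_0}(\dot v+v\,\dive_{\partial\Omega_0}V^{\top})\,d\ell=-\int_{\partial\Omega_0}Hv^2\,d\ell$, and this exactly cancels the $Hv^2$ produced by $\partial_t(d\ell_t)$. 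The $Hv^2$ that survives in \eqref{2nd} comes from a source your sketch never invokes: differentiating $\partial\varphi_t/\partial\nu_t$ along the flow produces, besides $\partial\varphi'/\partial\nu$, the term $\nabla^2\varphi_0(V,\nu)=v\,\nabla^2\varphi_0(\nu,\nu)=-Hcv$, where the last identity follows from restricting $\Delta\varphi_0=-\lambda_1\varphi_0=0$ to $\partial\Omega_0$ and using $\varphi_0|_{\partial\Omega_0}=0$ (so that $\nabla^2\varphi_0(\nu,\nu)+H\,\partial\varphi_0/\partial\nu=0$ there). Carried out literally as described, your computation yields $2c^2\int_{\partial\Omega_0}v\,\partial\widehat v/\partial\nu\,d\ell\pm c^2\int_{\partial\Omega_0}Hv^2\,d\ell$ rather than \eqref{2nd}. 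The fix is to add the identity $\partial^2\varphi_0/\partial\nu^2=-Hc$ on $\partial\Omega_0$ to your list of ingredients and to group the $\dot v$, $\dive_{\partial\Omega_0}V^{\top}$ and $\partial_t(d\ell_t)$ contributions into the vanishing second variation of volume; with that, your Eulerian route closes and reproduces \eqref{2nd}.
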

\begin{proof}
See the Appendix \ref{appendix}.
\end{proof}

\begin{remark}\label{fredholmalternative}
Let $\Omega\subset (M,g)$ be a smooth compact domain and let $\varphi$ be a first Dirichlet eigenfunction  of the Laplacian operator on $\Omega$. Given $v\in C^\infty(\Sigma)$, as a consequence of the Fredholm alternative, there exists a $(\Delta+\lambda_1(\Omega))$-extension $\widehat{v}$ of $v$ if and only if
\begin{equation}\label{fredholmcondition}
\int_{\partial\Omega} v\dfrac{\partial \varphi}{\partial\nu}\,d\ell=0.
\end{equation}
Moreover, $\widehat{v}$ is unique up to an addition of a multiple of $\varphi$ (see, for example, Lemma 2.5 of \cite{tran1}). In the case $\Omega_0\subset (M,g)$ is an extremal domain, we know that $\partial \varphi_0/\partial\nu$ is constant along $\partial\Omega$. Thus, \eqref{fredholmcondition} becomes 
$$
\int_{\partial\Omega_0} v\,d\ell=0,
$$
which is equivalent to say that the local deformation of $\Omega_0$ given by $v$ is volume preserving.
\end{remark}

\begin{remark}
The second variation formula given by \eqref{2nd} was deduced by Garabedian and Schiffer \cite{GarabedianSchiffer2} for domains in the Euclidean plane $\mathbb{R}^2$ and by Shimakura \cite{Shimakura2} for domains in $\mathbb{R}^n$ for $n\geq 3$ (see also \cite[Theorem 2.5.6]{Henrot}). To the best of the authors' knowledge, this is the first time that this formula for extremal domains in general Riemannian manifolds appears in the literature. 
\end{remark}

We recall that volume preserving local deformations of \(\Omega_0\) are in bijection with functions in the space \(\mathcal{F}(\partial \Omega_0) = \{ v \in C^\infty(\partial \Omega_0) : \int_{\partial \Omega_0} v \, d\ell = 0 \}\) of smooth functions on the boundary of \(\Omega_0\) with zero average. Motivated by Proposition \ref{2nd}, we consider the quadratic form
\[
Q: \mathcal{F}(\partial \Omega_0) \times \mathcal{F}(\partial \Omega_0) \to \mathbb{R}
\]
given by
\[
Q(v,w) = \int_{\partial \Omega_0} \left( v \frac{\partial \widehat{w}}{\partial \nu} + Hvw \right) \, d\ell,
\]
where $\widehat{w}$ is a $(\Delta+\lambda_1(\Omega_0))$-extension of $w$ to $\Omega_0$. 
This setup allows us to define the index of an extremal domain.

\begin{definition}[Index of Extremal Domains]
If \(\Omega_0 \subset (M, g)\) is an extremal domain, then the Morse index of \(\Omega_0\) is given by
\begin{equation*}
\operatorname{Ind}(\Omega_0) = \max \{ \dim V : V \subset \mathcal{F}(\partial \Omega_0) \text{ and } Q \text{ is negative definite on } V \}.
\end{equation*}
\end{definition}

By this definition, the index represents the number of linearly independent, volume preserving variations that decrease the first Dirichlet eigenvalue. In particular, \(\Omega_0\) is stable if and only if \(\operatorname{Ind}(\Omega_0) = 0\).

\smallskip

Now, we have the following useful criterion for the stability of extremal domains.
\begin{proposition}\label{stabilitycriterion}
An extremal domain \(\Omega_0 \subset (M, g)\) is stable if and only if
\begin{equation*}\label{stabilityineq1}
\int_{\Omega_0} |\nabla v|^2 \, da - \lambda_1(\Omega_0) \int_{\Omega_0} v^2 \, da + \int_{\partial \Omega_0} H v^2 \, d\ell \geq 0,
\end{equation*}
for all \(v \in C^\infty(\Omega_0)\) such that \(\int_{\partial \Omega_0} v \, d\ell_g = 0\).
\end{proposition}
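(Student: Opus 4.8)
The plan is to deduce this criterion from the second variation formula \eqref{2nd} by rewriting the boundary integral there as an interior Dirichlet-type integral, and then by removing the constraint that the test function be a $(\Delta+\lambda_1)$-extension. First I would take $v\in\mathcal F(\partial\Omega_0)$ and let $\widehat v$ be a $(\Delta+\lambda_1(\Omega_0))$-extension of $v$, which exists by Remark \ref{fredholmalternative}. Applying Green's identity on $\Omega_0$ to $\widehat v$ and using $\Delta\widehat v=-\lambda_1(\Omega_0)\widehat v$, one gets
\[
\int_{\partial\Omega_0}\widehat v\,\frac{\partial\widehat v}{\partial\nu}\,d\ell=\int_{\Omega_0}\bigl(|\nabla\widehat v|^2+\widehat v\,\Delta\widehat v\bigr)\,da=\int_{\Omega_0}|\nabla\widehat v|^2\,da-\lambda_1(\Omega_0)\int_{\Omega_0}\widehat v^2\,da.
\]
Since $\widehat v=v$ on $\partial\Omega_0$, the right-hand side of \eqref{2nd} equals $2c^2\bigl(\int_{\Omega_0}|\nabla\widehat v|^2\,da-\lambda_1(\Omega_0)\int_{\Omega_0}\widehat v^2\,da+\int_{\partial\Omega_0}Hv^2\,d\ell\bigr)$, so $\Omega_0$ is stable if and only if this quantity is nonnegative for every $v\in\mathcal F(\partial\Omega_0)$; here $c^2>0$ drops out. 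This already gives the stated inequality for the special interior extensions $\widehat v$.

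Next I would argue that it suffices to test with these extensions, i.e. that the infimum of the functional $J(w)=\int_{\Omega_0}|\nabla w|^2-\lambda_1(\Omega_0)\int_{\Omega_0}w^2+\int_{\partial\Omega_0}Hw^2\,d\ell$ over all $w\in C^\infty(\Omega_0)$ with $\int_{\partial\Omega_0}w\,d\ell=0$ is attained (or approached) by functions that are $(\Delta+\lambda_1)$-harmonic in the interior. Given such a $w$, let $\widehat v$ be the $(\Delta+\lambda_1(\Omega_0))$-extension of $v:=w|_{\partial\Omega_0}$. The difference $u=w-\widehat v$ vanishes on $\partial\Omega_0$, so the boundary term is unchanged, and a standard computation using $\int_{\Omega_0}\nabla\widehat v\cdot\nabla u=-\int_{\Omega_0}\widehat v\,\Delta u$ together with $\Delta\widehat v=-\lambda_1\widehat v$ and integration by parts shows
\[
J(w)=J(\widehat v)+\int_{\Omega_0}\bigl(|\nabla u|^2-\lambda_1(\Omega_0)u^2\bigr)\,da,
\]
and the last integral is $\geq 0$ because $u\in H_0^1(\Omega_0)$ and $\lambda_1(\Omega_0)$ is the first Dirichlet eigenvalue, so $\int|\nabla u|^2\geq\lambda_1(\Omega_0)\int u^2$ by the variational characterization. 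Hence $J(w)\geq J(\widehat v)$, so nonnegativity of $J$ on all admissible $w$ is equivalent to nonnegativity on the extensions $\widehat v$, which we have just shown is equivalent to stability.

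The main obstacle, and the only place requiring care, is the reduction in the preceding paragraph: one must check that $v=w|_{\partial\Omega_0}$ lies in $\mathcal F(\partial\Omega_0)$ (immediate from $\int_{\partial\Omega_0}w\,d\ell=0$) so that the extension $\widehat v$ exists, and one must justify the cross-term vanishing $\int_{\Omega_0}\nabla\widehat v\cdot\nabla u\,da=\lambda_1(\Omega_0)\int_{\Omega_0}\widehat v\,u\,da$ — this is Green's identity applied with $u|_{\partial\Omega_0}=0$, so the boundary contribution drops — and finally invoke $\int_{\Omega_0}|\nabla u|^2\,da\geq\lambda_1(\Omega_0)\int_{\Omega_0}u^2\,da$ for $u\in H_0^1(\Omega_0)$. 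A minor technical point is that $\widehat v$ is only defined up to adding a multiple of $\varphi_0$ (Remark \ref{fredholmalternative}), but $\varphi_0$ vanishes on $\partial\Omega_0$ and satisfies $\int_{\Omega_0}|\nabla\varphi_0|^2=\lambda_1(\Omega_0)$, so one checks directly that $J(\widehat v)$ is independent of this choice; alternatively one simply fixes the extension orthogonal to $\varphi_0$. Assembling these steps yields the equivalence claimed in Proposition \ref{stabilitycriterion}.
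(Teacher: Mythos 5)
Your proposal is correct and follows essentially the same route as the paper: the paper's proof defines $S(v,v)=\int_{\Omega_0}|\nabla v|^2-\lambda_1\int_{\Omega_0}v^2+\int_{\partial\Omega_0}Hv^2$, proves $S(\widehat{v},\widehat{v})=Q(v|_{\partial\Omega_0},v|_{\partial\Omega_0})$ by integration by parts (your Green's identity step), and then shows $S(\widehat{v},\widehat{v})\leq S(v,v)$ using exactly your decomposition $v=\widehat{v}+u$ with $u\in H^1_0(\Omega_0)$ and the variational characterization of $\lambda_1$. Your additional remarks on the well-definedness of the extension modulo multiples of $\varphi_0$ are correct and make explicit a point the paper leaves implicit.
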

 
\begin{proof}
Given $v\in C^\infty(\Omega_0)$, define
$$
S(v,v)=S_0(v,v) + \int_{\partial\Omega_0} Hv^2\,d\ell,
$$
where $S_0(u,v)=\int_{\Omega_0} \langle \nabla u,\nabla v\rangle\,da-\lambda_1(\Omega_0)\int_{\Omega_0}uv\,da$. 

Integrating by parts, we obtain that 
\begin{equation}\label{eq1A}
S(\widehat{v|_{\partial\Omega_0}},\widehat{v|_{\partial\Omega_0}}) = Q(v|_{\partial\Omega_0},v|_{\partial\Omega_0}),
\end{equation}
where $\widehat{v|_{\partial\Omega_0}}$ is a $(\Delta+\lambda_1(\Omega_0))$-extension of $v|_{\partial\Omega_0}$.

Now, since $S_0(u,u)\geq 0$ for all $u\in C^\infty(\Omega_0)$ such that $u=0$ on $\partial\Omega_0$, we have that
\begin{equation}\label{eq1B}
S(\widehat{v|_{\partial\Omega_0}},\widehat{v|_{\partial\Omega_0}}) \leq S(v,v),
\end{equation}
for all $v\in C^\infty(\Omega_0)$. Therefore, from \eqref{eq1A} and \eqref{eq1B} we conclude the proof.
\end{proof}

We conclude this section with some properties of Jacobi functions associated to the first Dirichlet eigenvalue.

We say that a function \(v \in \mathcal F(\partial \Omega)\)  is a \textit{Jacobi function} of \(\lambda_1\) if
\begin{equation*}\label{jacobifunction}
\int_{\partial \Omega} \left( \dfrac{\partial \widehat{v}}{\partial \nu} + H v \right) w \, d\ell = 0,
\end{equation*}
for all \(w \in \mathcal F(\partial \Omega)\).

The next lemma presents basic facts about Jacobi functions of \(\lambda_1\) that will be used in Section \ref{mainthmproof}.

\begin{lemma}\label{basicfactsjacobi}
Let \(\Omega \subset (M,g)\) be an extremal domain.
\begin{itemize}
\item[(i)] \(v \in \mathcal F(\partial \Omega)\)  is a Jacobi function of \(\lambda_1\) if and only if there exist \(c \in \mathbb{R}\) and a \((\Delta + \lambda_1(\Omega))\)-extension \(\widehat{v}\) of \(v\) such that
\[
\dfrac{\partial \widehat{v}}{\partial \nu} + H v = c \quad \text{on } \partial \Omega.
\]
\item[(ii)] If \(\Omega\) is stable and \(v \in \mathcal F(\partial \Omega)\) satisfies
\[
\int_{\partial \Omega} \left( \dfrac{\partial \widehat{v}}{\partial \nu} + H v \right) v \, d\ell = 0,
\]
then \(v\) is a Jacobi function.
\end{itemize}
\end{lemma}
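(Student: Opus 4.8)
The plan is to treat the two items separately, since (i) is an elementary consequence of the Fredholm alternative combined with a Lagrange-multiplier argument on the boundary, while (ii) is the genuine content and uses stability.

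For (i), I would argue as follows. Suppose first that such a $c \in \mathbb{R}$ and a $(\Delta + \lambda_1(\Omega))$-extension $\widehat{v}$ exist with $\partial\widehat{v}/\partial\nu + Hv = c$ on $\partial\Omega$. Then for any $w \in \mathcal{F}(\partial\Omega)$ we have $\int_{\partial\Omega}(\partial\widehat{v}/\partial\nu + Hv)\,w\,d\ell = c\int_{\partial\Omega} w\,d\ell = 0$, so $v$ is a Jacobi function; note this direction does not even need $\int_{\partial\Omega} v\,d\ell = 0$ beyond what is built into $v \in \mathcal{F}(\partial\Omega)$. Conversely, suppose $v$ is a Jacobi function. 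Since $v \in \mathcal{F}(\partial\Omega)$, the Fredholm condition \eqref{fredholmcondition} of Remark \ref{fredholmalternative} holds (recall $\partial\varphi/\partial\nu$ is a nonzero constant on an extremal domain), so a $(\Delta + \lambda_1(\Omega))$-extension $\widehat{v}$ exists. Set $\psi := \partial\widehat{v}/\partial\nu + Hv \in C^\infty(\partial\Omega)$. The Jacobi condition says $\int_{\partial\Omega} \psi w\,d\ell = 0$ for all $w$ of zero average; i.e. $\psi$ is $L^2(\partial\Omega)$-orthogonal to the codimension-one subspace $\mathcal{F}(\partial\Omega)$, hence $\psi$ is a constant $c$. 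This is the desired identity. (The choice of extension is irrelevant here since changing $\widehat v$ by a multiple of $\varphi$ changes $\partial\widehat v/\partial\nu$ by a multiple of the constant $\partial\varphi/\partial\nu$, i.e. by a constant, which only shifts $c$.)

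For (ii), the idea is the standard index-form trick: one has a symmetric bilinear form and a distinguished direction $v$ on which it ``should'' be extremal, and stability (nonnegativity of the form) forces $v$ to lie in the nullspace. Concretely, by Proposition \ref{stabilitycriterion} and the identity \eqref{eq1A} in the proof there, the quadratic form $Q$ on $\mathcal{F}(\partial\Omega)$ is positive semidefinite, and for any $v, w \in \mathcal{F}(\partial\Omega)$ one computes $Q(v,w) = \int_{\partial\Omega}(\partial\widehat w/\partial\nu + Hw)\,v\,d\ell$, which by integration by parts (using that $\widehat v, \widehat w$ are both $(\Delta+\lambda_1)$-extensions) is symmetric in $v$ and $w$. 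Now suppose $v \in \mathcal{F}(\partial\Omega)$ satisfies $Q(v,v) = \int_{\partial\Omega}(\partial\widehat v/\partial\nu + Hv)\,v\,d\ell = 0$. Then $v$ is a minimizer of the nonnegative form $Q$, so for every $w \in \mathcal{F}(\partial\Omega)$ and every $t \in \mathbb{R}$,
\[
0 \leq Q(v + tw, v + tw) = Q(v,v) + 2t\,Q(v,w) + t^2 Q(w,w) = 2t\,Q(v,w) + t^2 Q(w,w).
\]
Dividing by $t > 0$ and letting $t \to 0^+$, then by $t < 0$ and letting $t \to 0^-$, gives $Q(v,w) = 0$ for all $w \in \mathcal{F}(\partial\Omega)$. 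But $Q(v,w) = \int_{\partial\Omega}(\partial\widehat v/\partial\nu + Hv)\,w\,d\ell$, so this is exactly the statement that $v$ is a Jacobi function.

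The main obstacle, such as it is, is bookkeeping rather than a deep difficulty: one must be careful that the bilinear form $Q(v,w)$ is well defined (independent of the chosen extensions $\widehat v, \widehat w$) and genuinely symmetric, which hinges on Green's identity $\int_{\Omega}(\widehat v\,\Delta\widehat w - \widehat w\,\Delta\widehat v)\,da = \int_{\partial\Omega}(\widehat v\,\partial\widehat w/\partial\nu - \widehat w\,\partial\widehat v/\partial\nu)\,d\ell$ together with $\Delta\widehat v = -\lambda_1\widehat v$, $\Delta\widehat w = -\lambda_1\widehat w$, and $\widehat v|_{\partial\Omega} = v$, $\widehat w|_{\partial\Omega} = w$; the ambiguity by multiples of $\varphi$ drops out because $\varphi|_{\partial\Omega} = 0$. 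Once symmetry and well-definedness of $Q$ are in hand, part (ii) is the textbook first-variation argument for a constrained nonnegative quadratic form, and part (i) is pure linear algebra on $\partial\Omega$ plus the Fredholm alternative already recorded in Remark \ref{fredholmalternative}.
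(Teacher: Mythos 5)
Your proof is correct, and it is the standard argument this lemma calls for; the paper actually states the lemma without giving a proof, and your two steps are exactly what is needed: for (i), existence of the extension via the Fredholm alternative of Remark \ref{fredholmalternative} plus the fact that the $L^2(\partial\Omega)$-orthogonal complement of the zero-average functions is the constants; for (ii), symmetry of $Q$ via Green's identity, positive semidefiniteness of $Q$ from stability, and the polarization (first-variation) trick for a nonnegative quadratic form. One small correction to your bookkeeping remark: the independence of $Q(v,w)$ from the choice of extension is not a consequence of $\varphi|_{\partial\Omega}=0$ alone; rather, replacing $\widehat{w}$ by $\widehat{w}+s\varphi$ shifts $\partial\widehat{w}/\partial\nu$ by the \emph{constant} $s\,\partial\varphi/\partial\nu$ (extremality), and its pairing with $v$ vanishes because $\int_{\partial\Omega} v\,d\ell=0$ — this is where both extremality of $\Omega$ and $v\in\mathcal F(\partial\Omega)$ are used.
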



\section{Instability of rotationally symmetric annuli in $\mathbb{S}^2$}

Given \(r_0 \in (0, \pi/2)\), let \(A_{r_0} \subset (\mathbb{S}^2, g_{can})\) be the rotationally symmetric domain given by \(A_{r_0} = \{ p \in \mathbb{S}^2 : d_{\mathbb{S}^2}(p, \gamma_0) \leq r_0 \}\), where \(\gamma_0 = \mathbb{S}^2 \cap \{ x_3 = 0 \}\).

Since \(A_{r_0}\) is rotationally symmetric and it is also symmetric with respect to \(\gamma_0\), it is not difficult to see that \(A_{r_0}\) is an extremal domain for all \(r_0 \in (0, \pi/2)\).

In this section, we will show that all these rotationally symmetric annuli are unstable extremal domains. We start with  the following lemma.
\begin{lemma}\label{lambda_1}
Let \(\Omega \subset \mathbb{S}^2\) be a stable extremal domain. If \(\partial \Omega\) has at least two connected components and \(\int_{\partial \Omega} x_i \, d\ell = 0\) for all \(i = 1, 2, 3\), then \(\lambda_1(\Omega) \leq 1\).
\end{lemma}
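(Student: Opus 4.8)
The plan is to use the coordinate functions $x_1, x_2, x_3$ as test functions in the stability inequality of Proposition \ref{stabilitycriterion}. Recall that on $(\mathbb{S}^2, g_{can})$ each $x_i$ restricted to $\Omega$ satisfies $\Delta x_i = -2 x_i$, i.e. $x_i$ is an eigenfunction of the Laplacian on $\mathbb{S}^2$ with eigenvalue $2$. The hypothesis $\int_{\partial\Omega} x_i\, d\ell = 0$ guarantees that each $x_i$ is an admissible test function for the stability quadratic form, so that
\[
\int_\Omega |\nabla x_i|^2\, da - \lambda_1(\Omega)\int_\Omega x_i^2\, da + \int_{\partial\Omega} H x_i^2\, d\ell \geq 0.
\]
First I would sum this over $i = 1, 2, 3$. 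Using $\sum_i x_i^2 \equiv 1$ and $\sum_i |\nabla x_i|^2 = 2$ (the latter being a standard identity on the unit sphere, which also follows by integrating $\sum_i x_i \Delta x_i = -2\sum_i x_i^2 = -2$ by parts and using $x_i = 0$ is \emph{not} assumed — instead one uses the pointwise identity directly), the summed inequality becomes
\[
2\,\area(\Omega) - \lambda_1(\Omega)\,\area(\Omega) + \int_{\partial\Omega} H\, d\ell \geq 0.
\]

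The next step is to control the boundary term $\int_{\partial\Omega} H\, d\ell$. On a surface, $H$ is the geodesic curvature $\kappa_g$ of $\partial\Omega$ with respect to the outward normal, so by the Gauss–Bonnet theorem applied to $\Omega$,
\[
\int_{\partial\Omega} H\, d\ell = \int_{\partial\Omega}\kappa_g\, d\ell = 2\pi\chi(\Omega) - \int_\Omega K_g\, da = 2\pi\chi(\Omega) - \area(\Omega),
\]
since $K_g \equiv 1$ on $\mathbb{S}^2$. Here $\chi(\Omega) = 2 - 2g_\Omega - r$ where $r \geq 2$ is the number of boundary components. Thus $\chi(\Omega) \leq 2 - r \leq 0$, so $\int_{\partial\Omega} H\, d\ell \leq -\area(\Omega)$. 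Substituting into the summed stability inequality gives
\[
2\,\area(\Omega) - \lambda_1(\Omega)\,\area(\Omega) - \area(\Omega) \geq 2\,\area(\Omega) - \lambda_1(\Omega)\,\area(\Omega) + \int_{\partial\Omega} H\, d\ell \geq 0,
\]
hence $(1 - \lambda_1(\Omega))\,\area(\Omega) \geq 0$, and since $\area(\Omega) > 0$ we conclude $\lambda_1(\Omega) \leq 1$.

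I expect the main point requiring care to be the justification that each $x_i$ is genuinely admissible and that the pointwise identities $\sum x_i^2 = 1$, $\sum |\nabla x_i|^2 = 2$ hold for the induced metric on $\Omega \subset \mathbb{S}^2$ (these are intrinsic to $\mathbb{S}^2$ and restrict correctly). The role of the two hypotheses is now transparent: the balancing condition $\int_{\partial\Omega} x_i\,d\ell = 0$ is exactly what makes $x_i$ a valid variation, and the assumption $r \geq 2$ is what forces $\chi(\Omega) \leq 0$ and hence makes the Gauss–Bonnet bound on the mean-curvature integral strong enough; if $r = 1$ one could only say $\chi(\Omega) \leq 1$ and the argument would not close. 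No genuine obstacle is anticipated beyond bookkeeping.
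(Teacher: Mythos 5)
Your proof is correct and follows essentially the same route as the paper: summing the stability inequality over the three coordinate functions (using $\sum_i x_i^2 = 1$ and $\sum_i |\nabla x_i|^2 = 2$) and then bounding $\int_{\partial\Omega}\kappa_g\,d\ell \leq -\area(\Omega)$ via Gauss--Bonnet and $r\geq 2$. The only cosmetic difference is that you carry a genus term $g_\Omega$ in $\chi(\Omega)$, which vanishes automatically for a domain in $\mathbb{S}^2$.
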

\begin{proof} By Proposition \ref{stabilitycriterion}, the stability of \(\Omega\) means that
\begin{equation}\label{ineqC}
\int_{\partial \Omega} \kappa_g v^2 \, d\ell + \int_{\Omega} |\nabla v|^2 \, da - \lambda_1(\Omega) \int_{\Omega} v^2 \, da \geq 0
\end{equation}
for all \(v \in C^{\infty}(\Omega)\) such that \(\int_{\partial \Omega} v \, d\ell = 0\), where $\kappa_g$ denotes the geodesic curvature of $\partial\Omega$.

Since \(\int_{\partial \Omega} x_i \, d\ell = 0\) for all \(i = 1, 2, 3\), we can use \(x_1, x_2\), and \(x_3\) as test functions in \eqref{ineqC}. By summing over \(i = 1, 2, 3\), we get
\begin{equation}\label{ineqD}
\int_{\partial \Omega} \kappa_g \, d\ell + 2 \operatorname{area}(\Omega) \geq \lambda_1(\Omega) \operatorname{area}(\Omega).
\end{equation}

Let \(r\) be the number of connected components of \(\partial \Omega\). Since \(r \geq 2\), we have by the Gauss-Bonnet Theorem that \(\int_{\partial \Omega} \kappa_g \, d\ell = -\operatorname{area}(\Omega) + 2\pi(2 - r) \leq -\operatorname{area}(\Omega)\). Thus, by this inequality and by \eqref{ineqD}, we have
\[
\operatorname{area}(\Omega) \geq \lambda_1(\Omega) \operatorname{area}(\Omega).
\]
Therefore, \(\lambda_1(\Omega) \leq 1\).
\end{proof}

Now we have:
\begin{proposition}
The rotationally symmetric annuli \(A_{r_0}\subset(\mathbb{S}^2, g_{can})\) are not stable for any \(r_0 \in (0, \pi/2)\).
\end{proposition}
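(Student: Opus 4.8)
The plan is to exhibit, for each $r_0\in(0,\pi/2)$, a single admissible variation along which the stability inequality of Proposition~\ref{stabilitycriterion} fails strictly; the right choice turns out to be the $\theta$‑derivative of the first eigenfunction. Throughout I would use coordinates $(\theta,\phi)$ on $\mathbb S^2$ off the poles, with $\theta\in(-\pi/2,\pi/2)$ the signed distance to $\gamma_0$ and $\phi\in\mathbb R/2\pi\mathbb Z$, so that $g_{can}=d\theta^2+\cos^2\theta\,d\phi^2$ and $A_{r_0}=\{\,|\theta|\le r_0\,\}$. Then $\partial A_{r_0}$ is the union of the two circles $\{\theta=\pm r_0\}$, each of length $2\pi\cos r_0$, and computing the second fundamental form with respect to $\nu=\pm\partial_\theta$ (equivalently, using Gauss--Bonnet exactly as in the proof of Lemma~\ref{lambda_1}) one gets $H\equiv-\tan r_0$ on $\partial A_{r_0}$.

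The essential structural fact is the symmetry of the ground state: since $\lambda_1(A_{r_0})$ is simple and $A_{r_0}$ is invariant under the rotations about the $x_3$‑axis and under $\theta\mapsto-\theta$, the positive eigenfunction $\varphi_0$ is invariant under both, hence $\varphi_0=\varphi_0(\theta)$ and is even. Then $\Delta\varphi_0+\lambda_1(A_{r_0})\varphi_0=0$ becomes $\varphi_0''-\tan\theta\,\varphi_0'+\lambda_1(A_{r_0})\varphi_0=0$ on $[-r_0,r_0]$ with $\varphi_0(\pm r_0)=0$, and differentiating once, $\psi:=\varphi_0'$ solves $\psi''-\tan\theta\,\psi'+(\lambda_1(A_{r_0})-\sec^2\theta)\psi=0$. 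Putting $c:=\partial\varphi_0/\partial\nu=\varphi_0'(r_0)\ne0$, oddness of $\psi$ gives $\varphi_0'(-r_0)=-c$, and evaluating the first equation at $\theta=\pm r_0$ (where $\varphi_0=0$) gives $\varphi_0''(\pm r_0)=c\tan r_0$.

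I would then take $v:=\varphi_0'$, regarded as a radial function on $A_{r_0}$. It is smooth, and since $\varphi_0'$ is odd and the two boundary circles have equal length, $\int_{\partial A_{r_0}}v\,d\ell=0$, so $v$ is admissible in Proposition~\ref{stabilitycriterion}. Let $S(v,v):=\int_{A_{r_0}}|\nabla v|^2\,da-\lambda_1(A_{r_0})\int_{A_{r_0}}v^2\,da+\int_{\partial A_{r_0}}Hv^2\,d\ell$ be the left-hand side of the stability inequality. Integrating by parts, $\int_{A_{r_0}}|\nabla v|^2\,da=-\int_{A_{r_0}}v\,\Delta v\,da+\int_{\partial A_{r_0}}v\,\partial_\nu v\,d\ell$, and since $v$ is radial, $\Delta v=v''-\tan\theta\,v'=(\sec^2\theta-\lambda_1(A_{r_0}))v$ by the equation for $\psi$. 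Using the boundary data above one finds $\int_{\partial A_{r_0}}v\,\partial_\nu v\,d\ell=4\pi c^2\sin r_0$ and $\int_{\partial A_{r_0}}Hv^2\,d\ell=-4\pi c^2\sin r_0$; after the two $\lambda_1(A_{r_0})\int v^2$ terms and the two $4\pi c^2\sin r_0$ terms cancel, what is left is
\[
S(v,v)=-\int_{A_{r_0}}\frac{(\varphi_0')^2}{\cos^2\theta}\,da=-2\pi\int_{-r_0}^{r_0}\frac{\varphi_0'(\theta)^2}{\cos\theta}\,d\theta<0,
\]
the strict inequality holding because $\varphi_0$ is non-constant. By Proposition~\ref{stabilitycriterion} this shows $A_{r_0}$ is not stable, for every $r_0\in(0,\pi/2)$.

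The main obstacle is really guessing the test function: once one tries $v=\varphi_0'$ the computation is short, but it is sign-sensitive — one must carefully track the sign of $H$, the opposite signs of the odd function $\varphi_0'$ versus the equal signs of the even function $\varphi_0''$ on the two boundary circles, and the orientations $\nu=\pm\partial_\theta$, so that the two $4\pi c^2\sin r_0$ boundary contributions cancel exactly. A less efficient route would go through Lemma~\ref{lambda_1}: the rotational and $x_3\mapsto-x_3$ symmetries give $\int_{\partial A_{r_0}}x_i\,d\ell=0$, so stability would force $\lambda_1(A_{r_0})\le 1$, and using $v=x_3$ in the stability inequality would force $\lambda_1(A_{r_0})\le 3/\sin^2 r_0-4$; but pushing this to a contradiction for all $r_0$ needs an extra lower bound on $\lambda_1(A_{r_0})$ in an intermediate range, which the choice $v=\varphi_0'$ sidesteps.
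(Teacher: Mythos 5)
Your proof is correct, and it is a genuinely different (and more unified) argument than the one in the paper. I checked the computation: with $\nu=\pm\partial_\theta$ at $\theta=\pm r_0$ and $H=\kappa_g=-\tan r_0$ (consistent with Gauss--Bonnet, $\int_{\partial A_{r_0}}\kappa_g\,d\ell=-\operatorname{Area}(A_{r_0})$), the parity of $\varphi_0$ gives $\varphi_0'(\pm r_0)=\pm c$ and $\varphi_0''(\pm r_0)=c\tan r_0$, so the two boundary contributions $\int_{\partial A_{r_0}}v\,\partial_\nu v\,d\ell=4\pi c^2\sin r_0$ and $\int_{\partial A_{r_0}}Hv^2\,d\ell=-4\pi c^2\sin r_0$ indeed cancel, and the differentiated ODE yields $\Delta v+\lambda_1 v=\sec^2\theta\,v$, whence $S(v,v)=-\int_{A_{r_0}}\sec^2\theta\,(\varphi_0')^2\,da<0$ since $\varphi_0\not\equiv\mathrm{const}$; note also that $v=\varphi_0'$ is smooth on $\overline{A_{r_0}}$ because the annulus avoids the poles and $\varphi_0$ is smooth up to the boundary, so it is an admissible competitor in Proposition~\ref{stabilitycriterion}. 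The paper instead splits the range of $r_0$: for $r_0<\pi/3$ it proves $\lambda_1(A_{r_0})>1$ via the substitution $w=\sqrt{\cos r}\,\varphi$ and contradicts Lemma~\ref{lambda_1} (which uses the coordinate functions $x_1,x_2,x_3$ as test functions and needs at least two boundary components), while for $r_0\ge\pi/3$ it tests stability directly with $\sin r/\sin r_0$ and Gauss--Bonnet. Your single test function $v=\varphi_0'$ covers all $r_0\in(0,\pi/2)$ at once and needs no eigenvalue estimate; what the paper's two-case route buys in exchange is the quantitative information $\lambda_1(A_{r_0})>1$ for thin annuli and the reusable Lemma~\ref{lambda_1}, which also feeds into the proof of Theorem~\ref{maintheorem}.
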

\begin{proof}
First, note that we can write \(g_{can}\) on \(A_{r_0}\) as \(g_{can} = dr^2 + \cos^2 r \, d\theta^2\) on \([-r_0, r_0] \times \mathbb{S}^1\), where \(r_0 \in (0, \pi/2)\).

We note that the first eigenfunction of \(\Delta_{g_{can}}\) on \(A_{r_0}\) with Dirichlet boundary condition is a radial function, that is, it depends only on \(r\).

If \(\varphi = \varphi(r)\), then
\[
\Delta_{g_{can}} \varphi = \varphi'' - (\tan r) \varphi'.
\]

We would like to investigate the first eigenvalue of the following problem:
\begin{equation*}\label{eqA}
\varphi'' - (\tan r) \varphi' + \lambda \varphi = 0, \quad \varphi(-r_0) = \varphi(r_0) = 0.
\end{equation*}

Define \(f(r) = \sqrt{\cos r}\). Since \(f\) solves
\begin{equation}\label{eqB}
f' = -\frac{1}{2} (\tan r) f,
\end{equation}
we have, by a simple computation, that \(w = f \varphi\) solves
\begin{equation*}\label{eqC}
w'' - \frac{f''}{f} w + \lambda w = 0, \quad w(-r_0) = w(r_0) = 0.
\end{equation*}

As a consequence, we have
\begin{equation}\label{eqD}
\lambda_1(A_{r_0}) = \inf \frac{\int_{-r_0}^{r_0} (w')^2 + \left( \frac{f''}{f} \right) w^2 \, dr}{\int_{-r_0}^{r_0} w^2 \, dr},
\end{equation}
where the \(\inf\) is taken over the set of functions \(w\) defined on \([-r_0, r_0]\) such that \(w(-r_0) = w(r_0) = 0\).

Our next goal is to find a lower estimate for \(\lambda_1(A_{r_0})\).

First, it follows from \eqref{eqB} that
\begin{equation*}\label{eqE}
\frac{f''}{f} = -\frac{1}{2} - \frac{1}{4} \tan^2 r.
\end{equation*}

Moreover, we have
\begin{equation*}
\frac{\int_{-r_0}^{r_0} (w')^2 \, dr}{\int_{-r_0}^{r_0} w^2 \, dr} \geq \left( \frac{\pi}{2r_0} \right)^2.
\end{equation*}

Note that for \(|r| \leq r_0 < \pi/3\), we have
\begin{equation}\label{ineqA}
\left( \frac{f''}{f} \right)(r) \geq \left( \frac{f''}{f} \right)(r_0) > -\frac{1}{2} - \frac{1}{4} \tan^2(\pi/3) = -\frac{5}{4}
\end{equation}
and
\begin{equation}\label{ineqB}
\left( \frac{\pi}{2r_0} \right)^2 > \frac{9}{4}
\end{equation}
for \(|r| \leq r_0 < \pi/3\).

Thus, it follows from \eqref{eqD} and estimates \eqref{ineqA} and \eqref{ineqB} that
\[
\lambda_1 \geq \left( \frac{\pi}{2r_0} \right)^2 + \left( \frac{f''}{f} \right)(r_0) > 1.
\]
From this and by using Lemma \ref{lambda_1}, we can conclude the instability of the extremal annuli \(A_{r_0} \subset \mathbb{S}^2\) for all \(r_0 < \pi/3\).

\smallskip

Now, suppose that \(r_0 \geq \pi/3\) and define \(\varphi(r) = \sin r / \sin r_0\) on \(A_{r_0}\). Note that \(\varphi(r_0) = 1\) and \(\varphi(-r_0) = -1\). Thus, \(\int_{\partial A_{r_0}} \varphi \, d\ell = 0\).

We have
\begin{eqnarray*}
\int_{\partial A_{r_0}} \kappa_g \varphi^2 \, d\ell + \int_{A_{r_0}} |\nabla \varphi|^2 \, da - \lambda_1(A_{r_0}) \int_{A_{r_0}} \varphi^2 \, da 
&<& \int_{\partial A_{r_0}} \kappa_g + \int_{A_{r_0}} |\nabla \varphi|^2 \, da \\
&=& -\operatorname{area}(A_{r_0}) + \int_{A_{r_0}} |\nabla \varphi|^2 \, da,
\end{eqnarray*}
where we have used the Gauss-Bonnet Theorem in the last equality.

We have
\begin{equation}\label{eqF}
\operatorname{area}(A_{r_0}) = 2\pi \int_{-r_0}^{r_0} \cos r \, dr = 4\pi \sin r_0
\end{equation}

\begin{equation}\label{eqG}
\int_{A_{r_0}} |\nabla \varphi|^2 \, da = \frac{2\pi}{\sin^2 r_0} \int_{-r_0}^{r_0} \cos^3 r \, dr = \frac{4\pi}{\sin r_0} - \frac{4\pi}{3} \sin r_0.
\end{equation}

By \eqref{eqF} and \eqref{eqG} we have
\[
\int_{\partial A_{r_0}} \kappa_g \varphi^2 \, d\ell + \int_{A_{r_0}} |\nabla \varphi|^2 \, da - \lambda_1(A_{r_0}) \int_{A_{r_0}} \varphi^2 \, da < \frac{4\pi}{\sin r_0} \left(1 - \frac{4}{3} \sin^2 r_0 \right) \leq 0,
\]
since \(r_0 \geq \pi/3\).

Thus, we conclude that the annulus \(A_{r_0} \subset \mathbb{S}^2\) is also unstable for \(r_0 \geq \pi/3\).

\end{proof}

\section{Proof of Theorem \ref{maintheorem}}\label{mainthmproof}

Let $\Omega\subset\mathbb{S}^2$ be a stable extremal domain and let $\partial^1\Omega,\partial^2\Omega,\ldots,\partial^r\Omega$, $r\geq 1$, be the connected components of $\partial\Omega$.

For each unit vector $a\in\mathbb{R}^3$, define the following vector field on $\mathbb{S}^2$: $V_a(x)=a\wedge x$, where $\wedge$ denotes the vector product in $\mathbb{R}^3$. 

Now, define $\varphi_a:\partial\Omega\to\mathbb{R}$ by $\varphi_a(x)=\langle V_a(x),\nu(x)\rangle $, where $\nu$ denotes the outward unit normal vector along $\partial\Omega$. Since $\dive_{\mathbb{S}^2} V_a=0$, we have 
\begin{equation}\label{eqH}
\int_{\partial^k\Omega}\varphi_a\,d\ell=0,
\end{equation}
for all $k=1,2,\ldots,r$.

The vector field $V_a$ generates a one-parameter family of rotations $f_t:\mathbb{S}^2\to\mathbb{S}^2$ around the vector $a$. Consider the local deformation of $\Omega$ in $\mathbb{S}^2$ given by $\Omega_t=f_t(\Omega)$. As $f_t$ is an isometry of $\mathbb{S}^2$ for all $t$, we have that $\lambda_1(\Omega_t)=\lambda_1(\Omega)$ for all $t$. In particular, 
$$
\left.\dfrac{d^2}{dt^2}\lambda_1(\Omega_t)\right|_{t=0}=0
$$
and, by Proposition \ref{2ndVF}, this implies that
$$
\int_{\partial\Omega}\left(\dfrac{\partial\widehat{\varphi_a}}{\partial\nu}+\kappa_g\varphi_a\right)\varphi_a\,d\ell=0.
$$

Therefore, since $\Omega$ is stable, we have by Lemma \ref{basicfactsjacobi} that $\varphi_a$ is a Jacobi function of $\lambda_1$, that is, there exists a function $\widehat{\varphi_a}\in C^\infty(\Omega)$ such that 
\begin{equation}\label{eqL}
\left\{
\begin{array}{rcl}
\Delta\widehat{\varphi_a}+\lambda_1(\Omega)\widehat{\varphi_a}=0 & \mbox{on} & \Omega,\vspace{0.2cm}\\
\dfrac{\partial \widehat{\varphi_a}}{\partial\nu}+\kappa_g\varphi_a=c & \mbox{on} & \partial\Omega.
\end{array}
\right.
\end{equation}

After adding $\widehat{\varphi_a}$ to a constant multiple of $\varphi$, if necessary, where $\varphi$ is the first eigenfuncion of the Laplacian operator on $\Omega$ with Dirichlet boundary condition such that $\varphi>0$ and  $\|\varphi\|_{L^2(\Omega)}=1$, we may assume without loss of generality that $c=0$ in the second equation above.

Next, fix \(k \in \{1,2,\ldots,r\}\). We know that \(\mathbb{S}^2 \setminus \partial^k \Omega\) consists of two connected components, both homeomorphic to a disk. Let \(W\) be one of these connected components. Note that \(\partial W = \partial^k \Omega\). Now, let \(D\) be the closed geodesic disk of largest radius such that \(D \subset \overline{W}\). Because of the way \(D\) was chosen, we have that \(\partial D\) touches \(\partial^k \Omega\) tangentially at least at two points. In particular, if \(a \in D\) is the center of \(D\), then \(\varphi_a\) vanishes at least at two points on \(\partial^k \Omega\). We claim that \(\varphi_a\) vanishes at least at one more point of \(\partial^k \Omega\). In fact, if \(-a \in \partial^k \Omega\), we are done. If not, we let \(W'\) be the connected component of \(\mathbb{S}^2 \setminus \partial^k \Omega\) which contains \(-a\) and consider \(D'\), the geodesic closed disk centered at \(-a\) contained in \(\overline{W'}\) with the largest possible radius. In this case, we have that \(\partial D'\) touches \(\partial^k \Omega\) tangentially at least at one point. Thus, this proves our claim.

Following ideas from Ros and Vergasta \cite{RV}, we will prove in the following that $\Omega\setminus \widehat{\varphi_a}^{-1}(0)$ has at least three connected components. Let $M_1,\ldots,M_l$ be the connected components of $\Omega\setminus \widehat{\varphi_a}^{-1}(0)$. By applying the Gauss-Bonnet Theorem to each $M_i$ we obtain
\begin{equation*}\label{eqI}
\area(M_i)=2\pi\chi(M_i)-\int_{\partial M_i}\kappa_g\,d\ell-\sum_{k=1}^{j_i}\theta^i_{k},
\end{equation*}
where  $\theta^i_k$ are the external angles of $\partial M_i$.

By summing over $i=1,2,\ldots,l$, we get that
\begin{equation*}\label{eqJ}
\area(\Omega)=2\pi\left(\sum_{i=1}^l\chi(M_i)\right)-\int_{\partial\Omega}\kappa_g\,d\ell-\sum_j\theta_j,
\end{equation*}
where $\sum_j\theta_j$ stands for the sum of the external angles of all nodal domains $M_i$.

By the Gauss-Bonnet Theorem applied to $\Omega$, we have
\begin{equation}\label{eqK}
2\pi(2-r)=2\pi\left(\sum_{i=1}^l\chi(M_i)\right)-\sum_j\theta_j.
\end{equation}

Now, it follows from \eqref{eqH} that $\varphi_a$ vanishes in at least two points of $\partial^i\Omega$, for all $i=1,2,\ldots,r$. Moreover, we know that for $\varphi_a$ vanishes in at least three points of $\partial^k\Omega$. This implies that
\begin{equation}\label{ineqE}
\sum_j\theta_j\geq 2\pi\left(\dfrac{1}{2}+r\right).
\end{equation}

Thus, by \eqref{eqK} and \eqref{ineqE} we can conclude that
$$
\sum_{i=1}\chi(M_i)\geq \dfrac{5}{2}>2.
$$
This implies that $\Omega\setminus (\widehat{\varphi_a})^{-1}(0)$ has at least three connected components.

But since $\Omega$ is a stable extremal domain and $\widehat{\varphi_a}$ solves \eqref{eqL} we have that $\Omega\setminus(\widehat{\varphi_a})^{-1}(0)$ has at most two connected components unless $\widehat{\varphi_a}\equiv 0$. Thus, it follows from the above argument that $\widehat{\varphi_a}\equiv 0$, which implies that $\Omega$ is rotationally symmetric.

Since the rotationally symmetric extremal annuli are unstable we have that $\Omega$ is a geodesic ball.

\section{Proof of Theorem \ref{topbound}}

Let $(M^2,g)$ be an orientable Riemannian surface and let $\Omega\subset M$ be a stable extremal domain. Suppose that $\Omega$ has nonnegative total Gauss curvature, that is, $\int_\Omega K_g\,da\geq 0$.

By a result of Gabard \cite[Théorème 7.2.]{Gab}, which improved a previous result due to Alfhors \cite{Ahl},  there exists a proper conformal branched cover $\varphi=(\varphi_1,\varphi_2):\Omega\to\mathbb{D}^2$, where $\mathbb{D}^2\subset\mathbb{R}^2$ is the closed unit disk, of degree at most $g_\Omega+r$. Moreover, after composing $\varphi$ with a conformal diffeomorphism of $\mathbb{D}^2$ if necessary, we may suppose that
$$
\int_{\partial\Omega}\varphi_i\,d\ell=0,
$$
for  $i=1,2$.

Thus, by the stability of $\Omega$, we have that
$$
\sum_{i=1}^2\int_{\partial\Omega}\kappa_g \varphi_i^2\,d\ell+\sum_{i=1}^2\int_{\Omega}|\nabla\varphi_i|^2\,da\geq\lambda_1(\Omega)\sum_{i=1}^2\int_\Omega\varphi_i^2\,da .
$$

Since $\varphi$ is conformal and has degree at most $g_\Omega+r$, 
$$
\sum_{i=1}^2\int_\Omega|\nabla \varphi_i|^2\,da\leq 2\pi(g_\Omega+r).
$$
Thus, it follows from this and from the Gauss-Bonnet Theorem that
$$
2\pi(2-2g_\Omega-r)+2\pi(g_\Omega+r)\geq \lambda_1(\Omega) \sum_{i=1}^2\int_\Omega\varphi_i^2\,da+\int_\Omega K_g\,da> 0.
$$
Thus, $4\pi-2\pi g_\Omega>0$, that is, $g_\Omega<2$. 

Now, in order to estimate the number $r$ of connected components of $\partial\Omega$, we use a similar, but different, balancing argument. 
Let $\overline{\Omega}$ denote a compact Riemannian surface obtained from $\Omega$ by attaching a conformal disk at any connected component of $\partial \Omega$, such that 
$\overline{\Omega}$ and $\Omega$ have the same genus.
There exists a nonconstant holomorphic map $\psi=(\psi_1,\psi_2,\psi_3):\overline{\Omega}\to\mathbb{S}^2$ of degree at most $1+\left\lfloor (g_\Omega+1)/2\right\rfloor$ (see, for example, \cite{GH}, p.261), where $\mathbb{S}^2\subset\mathbb{R}^3$ is the unit sphere centered at the origin.
Again, after composing $\psi$ with a conformal diffeomorphism of $\mathbb{S}^2$ if necessary, we may assume that
$$
\int_{\partial\Omega}\psi_i\,d\ell=0,
$$
for $i=1,2,3$.

For each $i=1,2,3$, we define 
$\varphi_i=\psi_i|_{\Omega}$. Since $\sum_{i=1}^3\varphi_i^2=1$ in $\Omega$, it follows from the stability of $\Omega$ that
\begin{align*}
\int_{\partial\Omega}\kappa_g\,d\ell+8\pi\left(1+\left\lfloor\dfrac{g_\Omega+1}{2}\right\rfloor\right)\geq&\int_{\partial\Omega}\kappa_g\,d\ell+\sum_{i=1}^3\int_{\overline{\Omega}}|\nabla \psi_i|^2\,da\\
>&\int_{\partial\Omega}\kappa_g\, d\ell +\sum_{i=1}^3 \int_{\Omega} |\nabla\varphi_i|^2\,da\\
\geq&\, \lambda_1(\Omega)\area(\Omega)>0.
\end{align*}

By the Gauss-Bonnet Theorem, we have that
$$
12\pi-2\pi r -4\pi g_\Omega +8\pi \left\lfloor\dfrac{g_\Omega+1}{2}\right\rfloor>0.
$$

Therefore, if $g_\Omega=0$, then $12\pi-2\pi r>0$ which implies that $r< 6$. In the case $g_\Omega=1$, we have $16\pi -2\pi r>0$ and this implies $r< 8$. 

\section{Proof of Theorem \ref{topbound2}}

Let $(M^2,g)$ be an orientable Riemannian surface. First, note that it follows from the asymptotic expansion of the Faber-Krahn profile proved by Druet in \cite{Druet} that
\begin{equation}\label{eq1top2}
\lim_{m\to 0}FK(m)m=\lambda_1(\mathbb{D}^2)\pi,
\end{equation}
where $\mathbb{D}^2\subset \mathbb{R}^2$ is the closed unit disk.

It is well known that $\lambda_1(\mathbb{D}^2)=j_0^2$, where $j_0$ is the first positive zero of the Bessel function $J_0$ (see, for example, Theorem 4, Section 5, Chapter II of \cite{Chavel84}). Moreover, since $j_0^2\approx 2.40483$ we have that $\lambda_1(\mathbb{D}^2)\approx 5.7832>5$.  Therefore, by \eqref{eq1top2}, we conclude that there exists $\varepsilon>0$ such that 
\begin{equation}\label{eq2top2}
FK(m)m>5\pi
\end{equation}
for all $0<m<\varepsilon$.

Now, suppose that $\Omega\subset M$ is a stable extremal domain. Following the same steps as the proof of Theorem \ref{topbound} we can obtain the following two inequalities:
\begin{equation*}
4\pi- 2\pi g_\Omega>\int_\Omega K_g\,da\geq -\sup_M|K_g|\area(\Omega)
\end{equation*}
and
\begin{eqnarray*}\label{eq3top2}
12\pi-2\pi r-4\pi g_\Omega+8\pi\left\lfloor\dfrac{g_\Omega+1}{2}\right\rfloor&\geq& \lambda_1(\Omega)\area(\Omega)+\int_\Omega K_g\,da\\
&\geq& \lambda_1(\Omega)\area(\Omega)-\sup_M|K_g|\area(\Omega).
\end{eqnarray*}

Therefore, decreasing $\varepsilon>0$ if necessary, we obtain from the first inequality that if $\area(\Omega)<\varepsilon$ then $g_\Omega\leq 2$. Moreover, since $\lambda_1(\Omega)\area(\Omega)>5\pi$ by \eqref{eq2top2}, 
we get from the second inequality that
$$
7\pi - 2\pi g_\Omega >-\sup_M|K_g|\area(\Omega)\ \ \mbox{if $g_\Omega\in\{0,2\}$}
$$
and
$$
11\pi - 2\pi r >-\sup_M|K_g|\area(\Omega) \ \ \mbox{if $g_\Omega=1$.}
$$
Thus, decreasing $\varepsilon>0$ once more if necessary, we obtain that $r\leq 3$ if $g_\Omega\in \{0,2\}$ and $r\leq 5$ if $g_\Omega=1$ whenever $\area(\Omega)<\varepsilon$.
\section{Proof of Theorem \ref{theoremhigherdim}}

Suppose that \(\Omega \subset (\mathbb{S}^n, g_{can})\) is a stable extremal domain such that \(\partial \Omega\) is a compact minimal hypersurface, that is, \(H_{\partial \Omega} = 0\). It follows from a result due to Reilly \cite[Theorem 4]{Reilly77} that \(\lambda_1(\Omega) \geq n\) with equality if and only if \(\Omega\) is a hemisphere.

Since \(\partial \Omega\) is minimal, it is well known that
\[
\int_{\partial \Omega} x_i \, d\ell = 0,
\]
for all \(i = 1, 2, \ldots, n+1\).

Thus, using the coordinate functions as test functions in Proposition \ref{stabilitycriterion}, we get
\begin{equation*}\label{ineq1thm3}
\int_{\Omega} |\nabla x_i|^2 \, da \geq \lambda_1(\Omega) \int_{\Omega} x_i^2 \, da
\end{equation*}
for all \(i = 1, 2, \ldots, n+1\).

By summing over \(i = 1, 2, \ldots, n+1\) and using that \(|\nabla x_i|^2 = 1 - x_i^2\), we have
\[
n \operatorname{vol}(\Omega) \geq \lambda_1(\Omega) \operatorname{vol}(\Omega).
\]
This implies \(\lambda_1(\Omega) \leq n\). Thus, \(\lambda_1(\Omega) = n\) and \(\Omega\) is a hemisphere.


\appendix 
\section{Proof of Proposition \ref{2ndVF}}\label{appendix}

Let $(M,g)$ be a Riemannian manifold and let $\Omega_0 \subset M$ be an extremal domain. In this section, we will deduce the second variation formula \eqref{2nd} for the first eigenvalue functional $\Omega \mapsto \lambda_1(\Omega)$ at $\Omega_0$.

Let $\varphi_0 \in C^\infty(\Omega_0)$ be the first eigenfunction of the Laplacian operator on $\Omega_0$ with Dirichlet boundary condition such that $\varphi_0 > 0$ on $\Omega_0$ and $\|\varphi_0\|_{L^2(\Omega_0)} = 1$. Since $\Omega_0$ is extremal, there is a constant $c \in \mathbb{R}$ such that $\partial \varphi_0 / \partial \nu_0 = c$ on $\partial \Omega_0$.

For each smooth domain $\Omega \subset M$, define
\begin{equation*}
J(\Omega) = \lambda_1(\Omega) + c^2 \vol(\Omega).
\end{equation*}

Let $\Omega_t = f_t(\Omega_0)$, $t \in (-\varepsilon, \varepsilon)$, be a local deformation of $\Omega_0$ in $M$ given by a smooth vector field $V \in \mathfrak{X}(M)$. It is well known that
\[
\dfrac{d}{dt}\vol(\Omega_t) = \int_{\partial \Omega_t} v_t \, d\ell_t, \quad \forall\, t \in (-\varepsilon, \varepsilon),
\]
where $v_t = \left\langle V, \nu_t \right\rangle$, $\nu_t$ denotes the outward unit normal vector along $\partial \Omega_t$, and $d\ell_t$ is the volume element of $\partial \Omega_t$ induced by $g$.

Therefore, by Proposition \ref{1stvarformula}, we have
\begin{equation}\label{J1stvariation}
\dfrac{d}{dt} J(\Omega_t) = -\int_{\partial \Omega_t} v_t \left[\left(\dfrac{\partial \varphi_t}{\partial \nu_t}\right)^2 - c^2 \right] \, d\ell_t, \quad \forall\, t \in (-\varepsilon, \varepsilon),
\end{equation}
where $\varphi_t \in C^\infty(\Omega_t)$ denotes the positive first eigenfunction of the Laplacian operator on $\Omega_t$ with Dirichlet boundary condition and such that $\|\varphi_t\|_{L^2(\Omega_t)} = 1$.

Since $\partial \varphi_0 / \partial \nu_0 = c$, it follows from \eqref{J1stvariation} that $\Omega_0$ is a critical point of $J$ with respect to all local deformations $\Omega_t = f_t(\Omega_0)$ of $\Omega_0$ in $M$, not only the volume preserving ones. Moreover, we have

\begin{align*}
    \left. \dfrac{d^2}{dt^2} J(\Omega_t) \right|_{t=0} &= -\int_{\partial \Omega_0} v_0 \left[ \left. \dfrac{d}{dt} \left( \frac{\partial \varphi_t}{\partial \nu_t} \right)^2 \right|_{t=0} \right] \, d\ell_0 \\
    &= -2 \int_{\partial \Omega_0} v_0 \dfrac{\partial \varphi_0}{\partial \nu_0} \left( \left. \dfrac{d}{dt} \dfrac{\partial \varphi_t}{\partial \nu_t} \right|_{t=0} \right) \, d\ell_0.
\end{align*}

From now on, we will denote derivatives with respect to $t$ at $t=0$ by using dots.

Note that
\begin{align*}
\dot{\left( \frac{\partial \varphi_t}{\partial \nu_t} \right)}
&= \frac{d}{dt} \Big|_{t=0} \langle \nabla \varphi_t, \nu_t \rangle \\
&= \langle \nabla \dot{\varphi}, \nu_0 \rangle + \langle \nabla \varphi_0, \dot{\nu} \rangle \\
&= \frac{\partial \dot{\varphi}}{\partial \nu_0} + \frac{\partial \varphi_0}{\partial \dot{\nu}}.
\end{align*}

Let $g_t = f_t^*(g)$ be the pullback metric of $g$ on $\Omega_0$. Let $h = \dot{g}$ be the derivative tensor at $t=0$. Note that $h = \mathcal{L}_V g$, where $\mathcal{L}$ denotes the Lie derivative.

Since $\nabla \varphi_0 = \left( \partial \varphi_0 / \partial \nu_0 \right) \nu_0$ on $\partial \Omega_0$, we get $\langle \nabla \varphi_0, \dot{\nu} \rangle = \left( \partial \varphi_0 / \partial \nu_0 \right) \langle \nu_0, \dot{\nu} \rangle$. Moreover, since $\langle \nu_t, \nu_t \rangle = 1$ for all $t$, it follows directly that
\[
\langle \nu_0, \dot{\nu} \rangle = -\dfrac{1}{2} h(\nu_0, \nu_0).
\]

Therefore,
\begin{align*}
    \left. \dfrac{d^2}{dt^2} J(\Omega_t) \right|_{t=0} &= -2 \int_{\partial \Omega_0} v_0 \dfrac{\partial \varphi_0}{\partial \nu_0} \dfrac{\partial \dot{\varphi}}{\partial \nu_0} \, d\ell_0 + \int_{\partial \Omega_0} v_0 \left( \dfrac{\partial \varphi_0}{\partial \nu_0} \right)^2 h(\nu_0, \nu_0) \, d\ell_0 \\
    &= -2c \int_{\partial \Omega_0} v_0 \dfrac{\partial \dot{\varphi}}{\partial \nu_0} \, d\ell_0 + c^2 \int_{\partial \Omega_0} v_0 h(\nu_0, \nu_0) \, d\ell_0.
\end{align*}

Now, suppose that $\Omega_t = f_t(\Omega_0)$, $t \in (-\varepsilon, \varepsilon)$, is a volume preserving local deformation. Note that, in this case, we have
\[
\left. \dfrac{d^2}{dt^2} J(\Omega_t) \right|_{t=0} = \left. \dfrac{d^2}{dt^2} \lambda_1(\Omega_t) \right|_{t=0}.
\]
Moreover, we also have that $\int_{\partial \Omega_0} v_0 \, d\ell_0 = 0$.

Let $\widehat{v}_0$ be a $(\Delta + \lambda_1(\Omega_0))$-extension of $v_0$, that is, $\widehat{v}_0$ solves the following problem (see Remark \ref{fredholmalternative}):
\begin{equation*}
    \left\{
    \begin{array}{rl}
        \Delta \widehat{v}_0 + \lambda_1(\Omega_0) \widehat{v}_0 = 0 & \textrm{in } \Omega_0, \\
        \widehat{v}_0 = v_0 & \textrm{on } \partial \Omega_0.
    \end{array}
    \right.
\end{equation*}

Using that $\dot{\varphi} = 0$ along $\partial \Omega_0$, the Green formula yields
\begin{align*}
\int_{\partial \Omega_0} v_0 \frac{\partial \dot{\varphi}}{\partial \nu_0} \, d\ell_0
&= \int_{\Omega_0} \left( \widehat{v}_0 \Delta \dot{\varphi} - \dot{\varphi} \Delta \widehat{v}_0 \right) \, da_0 \\
&= \int_{\Omega_0} \widehat{v}_0 \left( \Delta \dot{\varphi} + \lambda_1(\Omega_0) \dot{\varphi} \right) \, da_0,
\end{align*}
where $da_0$ is the volume element of $\Omega_0$.

Now, differentiating the equation $\Delta_{g_t} \varphi_t + \lambda_1(\Omega_t) \varphi_t = 0$ we get
\[
\dot{\Delta} \varphi_0 + \Delta \dot{\varphi} + \dot{\lambda}_1 \varphi_0 + \lambda_1(\Omega_0) \dot{\varphi} = 0.
\]
And so, since $\dot{\lambda}_1 = 0$ we have
\[
\Delta \dot{\varphi} + \lambda_1(\Omega_0) \dot{\varphi} = -\dot{\Delta} \varphi_0.
\]

Combining these identities, we arrive at
\begin{equation}\label{eqsegundaderivalambda}
   \left. \dfrac{d^2}{dt^2} \lambda_1(\Omega_t) \right|_{t=0} =
2c \int_{\Omega_0} \widehat{v}_0 \dot{\Delta} \varphi_0 \, da_0 + c^2 \int_{\partial \Omega_0} v_0 h(\nu_0, \nu_0) \, d\ell_0,
\end{equation}
where $\dot{\Delta} \varphi_0 = -\langle h, \nabla^2 \varphi_0 \rangle - \dive h(\nabla \varphi_0) + \frac{1}{2} \langle \nabla (\tr_{\Omega_0} h), \nabla \varphi_0 \rangle$ (see \cite{Berger73, ElsoufiIlias07}).

Integrating by parts, we have
\begin{align*}
-\int_{\Omega_0} \langle h, \nabla^2 \varphi_0 \rangle \widehat{v}_0 \, da_0 &= -\int_{\Omega_0} \dive (h(\nabla \varphi_0, \cdot) \widehat{v}_0) \, da_0 + \int_{\Omega_0} \dive h(\nabla \varphi_0) \widehat{v}_0 \, da_0 \\
&\quad + \int_{\Omega_0} h(\nabla \varphi_0, \nabla \widehat{v}_0) \, da_0 \\
&= -\int_{\partial \Omega_0} h(\nabla \varphi_0, \nu_0) \widehat{v}_0 \, d\ell_0 + \int_{\Omega_0} \dive h(\nabla \varphi_0) \widehat{v}_0 \, da_0 \\
&\quad + \int_{\Omega_0} h(\nabla \varphi_0, \nabla \widehat{v}_0) \, da_0.
\end{align*}

Plugging this identity into \eqref{eqsegundaderivalambda} and noting that
\[
h(\nabla \varphi_0, \nu_0) = \left( \partial \varphi_0 / \partial \nu_0 \right) h(\nu_0, \nu_0) = c h(\nu_0, \nu_0) \quad \text{on } \partial \Omega_0,
\]
we get
\begin{align*}
\left. \dfrac{d^2}{dt^2} \lambda_1(\Omega_t) \right|_{t=0} &= -2c \int_{\partial \Omega_0} h(\nabla \varphi_0, \nu_0) v_0 \, d\ell_0 + 2c \int_{\Omega_0} h(\nabla \varphi_0, \nabla \widehat{v}_0) \, da_0 \\
&\quad + c \int_{\Omega_0} \widehat{v}_0 \langle \nabla (\tr_{\Omega_0} h), \nabla \varphi_0 \rangle \, da_0 + c^2 \int_{\partial \Omega_0} v_0 h(\nu_0, \nu_0) \, d\ell_0 \\
&= -c^2 \int_{\partial \Omega_0} v h(\nu_0, \nu_0) \, d\ell_0 + 2c \int_{\Omega_0} h(\nabla \varphi_0, \nabla \widehat{v}_0) \, da_0 \\
&\quad + c \int_{\Omega_0} \langle \nabla (\tr_{\Omega_0} h), \nabla \varphi_0 \rangle \widehat{v}_0 \, da_0.
\end{align*}

Using $h = \mathcal{L}_V g$, we have
\begin{align*}
\int_{\Omega_0} h(\nabla \varphi_0, \nabla \widehat{v}_0) \, da_0
&= \int_{\Omega_0} \left( \mathcal{L}_V g \right)(\nabla \varphi_0, \nabla \widehat{v}_0) \, da_0 \\
&= \int_{\Omega_0} \left( \langle \nabla_{\nabla \varphi_0} V, \nabla \widehat{v}_0 \rangle + \langle \nabla_{\nabla \widehat{v}_0} V, \nabla \varphi_0 \rangle \right) \, da_0.
\end{align*}

On the other hand,
\begin{align*}
\text{div}(\langle V, \nabla \widehat{v}_0 \rangle \nabla \varphi_0)
&= \langle \nabla \langle V, \nabla \widehat{v}_0 \rangle, \nabla \varphi_0 \rangle + \langle V, \nabla \widehat{v}_0 \rangle \Delta \varphi_0 \\
&= \langle \nabla_{\nabla \varphi_0} V, \nabla \widehat{v}_0 \rangle + \langle V, \nabla_{\nabla \varphi_0} \nabla \widehat{v}_0 \rangle - \lambda_1(\Omega_0) \langle V, \nabla \widehat{v}_0 \rangle \varphi_0 \\
&= \langle \nabla_{\nabla \varphi_0} V, \nabla \widehat{v}_0 \rangle + \nabla^2 \widehat{v}_0(V, \nabla \varphi_0) - \lambda_1(\Omega_0) \langle V, \nabla \widehat{v}_0 \rangle \varphi_0,
\end{align*}
and similarly,
\begin{align*}
\text{div}(\langle V, \nabla \varphi_0 \rangle \nabla \widehat{v}_0)
&= \langle \nabla_{\nabla \widehat{v}_0} V, \nabla \varphi_0 \rangle + \nabla^2 \varphi_0(V, \nabla \widehat{v}_0) - \lambda_1(\Omega_0) \langle V, \nabla \varphi_0 \rangle \widehat{v}_0.
\end{align*}

Thus, we have that
\begin{align*}
\int_{\Omega_0} h(\nabla \varphi_0, \nabla \widehat{v}_0)\,da_0
=&
\int_{\Omega_0} \text{div}(\langle V, \nabla \widehat{v}_0\rangle \nabla \varphi_0)\,da_0
-\int_{\Omega_0}  \nabla^2 \widehat{v}_0(V,\nabla \varphi_0)\,  da_0\\ 
&+ \lambda_1(\Omega_0)\int_{\Omega_0}\langle V,\widehat{v}_0\rangle\varphi_0\,da_0 +\int_{\Omega_0}  \dive(\langle V, \nabla \varphi_0\rangle \nabla \widehat{v}_0)\,da_0\\
&-\int_{\Omega_0}  \nabla^2 \varphi_0(V,\nabla \widehat{v}_0)\,   da_0+\lambda_1(\Omega_0)\int_{\Omega_0} \langle V, \nabla \varphi_0\rangle\widehat{v}_0\, da_0\\
=&\int_{\partial \Omega_0}\left(\langle V, \nabla \widehat{v}_0\rangle\frac{\partial \varphi_0}{\partial \nu_0}+\langle V, \nabla \varphi_0\rangle \frac{\partial \widehat{v}_0}{\partial \nu_0}
\right)\,d\ell_0\\
& - \int_{\Omega_0} \left( \nabla^2 \widehat{v}_0(V,\nabla \varphi_0)+\nabla^2 \varphi_0(V,\nabla \widehat{v}_0) \right)\,  da_0\\
&+\lambda_1(\Omega_0)\int_{\Omega_0}  \langle V, \nabla \widehat{v}_0\rangle\varphi_0\,  da_0+\lambda_1(\Omega_0)\int_{\Omega_0} \langle V, \nabla \varphi_0\rangle\widehat{v}_0\, da_0.
\end{align*}

Now, on $\partial\Omega_0$,  we have $V=V^T+v_0\nu_0$ and $\nabla \varphi_0=c\nu_0$, where $V^T$ is the component of $V$ tangent to $\partial\Omega_0$. Moreover, note that 
$$
\nabla^2 \widehat{v}_0(V,\nabla \varphi_0)+\nabla^2 \varphi_0(V,\nabla \widehat{v}_0) = \langle\nabla \langle\nabla \widehat{v}_0,\nabla\varphi_0\rangle,V\rangle.
$$
Thus

\begin{align*}
\int_{\Omega_0} h(\nabla \varphi_0, \nabla \widehat{v}_0 )\, da_0
=&\int_{\partial \Omega_0}\left(\langle V^T, \nabla \widehat{v}_0 \rangle \frac{\partial \varphi_0}{\partial \nu_0} +v_0\frac{\partial \widehat{v}_0}{\partial \nu_0} \frac{\partial \varphi_0}{\partial \nu_0} 
+v_0\frac{\partial \varphi_0}{\partial\nu_0}\frac{\partial \widehat{v}_0}{\partial \nu_0}
\right)\,d\ell_0\\
&- \int_{\Omega_0}  \langle\nabla \langle\nabla \widehat{v}_0,\nabla\varphi_0\rangle,V\rangle\, da_0 +\lambda_1(\Omega_0)\int_{\Omega_0}  \langle V, \nabla \widehat{v}_0\rangle\varphi_0\,  da_0\\
&+\lambda_1(\Omega_0)\int_{\Omega_0} \langle V, \nabla \varphi_0\rangle\widehat{v}_0\, da_0\\
=&\,c\int_{\partial \Omega_0} \langle V^T, \nabla \widehat{v}_0 \rangle\, d\ell_0 +2c\int_{\partial \Omega_0}v_0 \frac{\partial \widehat{v}_0}{\partial \nu_0}\,d\ell_0\\
&-  \int_{\Omega_0}  \langle\nabla \langle\nabla \widehat{v}_0,\nabla\varphi_0\rangle,V\rangle\, da_0 +\lambda_1(\Omega_0)\int_{\Omega_0}  \langle V, \nabla \widehat{v}_0\rangle\varphi_0\,  da_0\\
&+\lambda_1(\Omega_0)\int_{\Omega_0} \langle V, \nabla \varphi_0\rangle\widehat{v}_0\, da_0.
\end{align*}
 
Next, let us deal with the last integral in the formula of $\left.\frac{d^2}{dt^2}\lambda_1(\Omega_t)\right|_{t=0}$ by applying integration by parts again and  using that $\tr_{\Omega_0}h = 2\dive V$.

We have:
\begin{align*}
\int_{\Omega_0} \langle \nabla \tr_{\Omega_0} h, \nabla \varphi_0\rangle\widehat{v}_0\,da_0
=&\int_{\Omega_0} \dive((\tr_{\Omega_0}h) \widehat{v}_0 \nabla \varphi_0)\,  da_0
-\int_{\Omega_0} (\tr_{\Omega_0}h) \dive(\widehat{v}_0 \nabla \varphi_0)\, da_0\\
=&\int_{\partial \Omega_0} (\tr_{\Omega_0} h) v_0 \frac{\partial\varphi_0}{\partial \nu_0}\,  d\ell_0
-2\int_{\Omega_0} \dive V  \dive(\widehat{v}_0 \nabla \varphi_0)\, da_0\\
=&\,c\int_{\partial\Omega_0}(\tr_{\partial\Omega_0}h) v_0\,d\ell_0 + c\int_{\partial\Omega_0}h(\nu_0,\nu_0) v_0\,d\ell_0\\
&-2\int_{\Omega_0} \dive V  \dive(\widehat{v}_0 \nabla \varphi_0)\, da_0,
\end{align*}
where in the last equality we have used that $\tr_{\Omega_0}h=\tr_{\partial\Omega_0}h+h(\nu_0,\nu_0)$. Now, since $V=\mathcal{L}_Vg$, we have that
\begin{align*}
\int_{\Omega_0} \langle \nabla \tr_{\Omega_0} h, \nabla \varphi_0\rangle\widehat{v}_0\,da_0=&\,2c\int_{\partial\Omega_0}(\dive_{\partial\Omega_0}V) v_0\,d\ell_0 + c\int_{\partial\Omega_0}h(\nu_0,\nu_0) v_0\,d\ell_0\\
&\hspace{-0.5cm}-2\int_{\Omega_0} \dive V (\langle\nabla\widehat{v}_0,\nabla\varphi_0\rangle+\widehat{v}_0\Delta\varphi_0)\, da_0
\end{align*}

By using that $\Delta\varphi_0+\lambda_1(\Omega_0)\varphi_0=0$, we obtain that
\begin{align*}
\int_{\Omega_0} \langle \nabla \tr_{\Omega_0} h, \nabla \varphi_0\rangle\widehat{v}_0\,da_0=&\,2c\int_{\partial\Omega_0}(\dive_{\partial\Omega_0}V) v_0\,d\ell_0 + c\int_{\partial\Omega_0}h(\nu_0,\nu_0) v_0\,d\ell_0\\
&-2\int_{\Omega_0} \dive V (\langle\nabla\widehat{v}_0,\nabla\varphi_0\rangle-\lambda_1(\Omega_0)\widehat{v}_0\varphi_0)\, da_0\\
=&\,2c\int_{\partial\Omega_0}(\dive_{\partial\Omega_0}V) v_0\,d\ell_0 + c\int_{\partial\Omega_0}h(\nu_0,\nu_0) v_0\,d\ell_0\\
&-2\int_{\Omega_0}\dive(\langle\nabla \widehat{v}_0,\nabla\varphi_0\rangle V)\,da_0+2\int_{\Omega_0} \langle V ,\nabla\langle\nabla\widehat{v}_0,\nabla\varphi_0\rangle\rangle\,da_0\\
&+2\lambda_1(\Omega_0)\int_{\Omega_0}(\dive V)\widehat{v}_0\varphi_0\, da_0.
\end{align*}

As $\dive_{\partial\Omega_0}V=\dive_{\partial\Omega_0}V^T+Hv_0$ on $\partial\Omega_0$, we obtain that

\begin{align*}
\int_{\Omega_0} \langle \nabla \tr_{\Omega_0} h, \nabla \varphi_0\rangle\widehat{v}_0\,da_0
&=2c\int_{\partial \Omega_0}v_0 \dive_{\partial\Omega_0}V^T\,  d\ell_0 + 2c\int_{\partial\Omega_0} H(v_0)^2\,d\ell_0\\
&\quad+c\int_{\partial\Omega_0}h(\nu_0,\nu_0)v_0\,d\ell_0-2\int_{\Omega_0} \dive(\langle \nabla \widehat{v}_0, \nabla \varphi_0\rangle V)\, da_0\\
&\quad+ 2\int_{\Omega_0} \langle V,\nabla\langle\nabla \widehat{v}_0,\nabla\varphi_0\rangle\rangle\,da_0+2\lambda_1(\Omega_0)\int_{\Omega_0} \dive(\widehat{v}_0\varphi_0 V)\,da_0
\\
&\quad-2\lambda_1(\Omega_0)\int_{\Omega_0} ( \langle V,\nabla\varphi_0\rangle \widehat{v}_0+
\langle V, \nabla\widehat{v}_0\rangle\varphi_0)\,da_0\\
&=-2c\int_{\partial \Omega_0}\langle \nabla_{\partial\Omega_0}v_0,V^T\rangle\,  d\ell_0 + 2c\int_{\partial\Omega_0} H(v_0)^2\,d\ell_0\\
&\quad+c\int_{\partial\Omega_0}h(\nu_0,\nu_0)v_0\,d\ell_0-2\int_{\Omega_0} \dive(\langle\nabla\widehat{v}_0,\nabla\varphi_0\rangle V)\,da_0 \\
&\quad+ 2\int_{\Omega_0} \langle V,\nabla\langle\nabla \widehat{v}_0,\nabla\varphi_0\rangle\rangle\,da_0 -2\lambda_1(\Omega_0)\int_{\Omega_0}  \langle V,\nabla\varphi_0\rangle \widehat{v}_0\,da_0\\
&\quad-2\lambda_1(\Omega_0)\int_{\Omega_0}
\langle V, \nabla\widehat{v}_0\rangle\varphi_0\,da_0.
\end{align*}

Thus, substituting into the formula above for $\left.\frac{d^2}{dt^2}\lambda_1(\Omega_t)\right|_{t=0}$ and canceling some terms, we have that
\begin{align*}
\left.\dfrac{d^2}{dt^2}\lambda_1(\Omega_t)\right|_{t=0}&= 4c^2\int_{\partial\Omega_0}v_0\dfrac{\partial\widehat{v}_0}{\partial\nu_0}\,d\ell_0+2c^2\int_{\partial \Omega_0} H(v_0)^2\,d\ell_0\\
&\quad -2c\int_{\Omega_0}\dive(\langle \nabla \widehat{v}_0,\nabla\varphi_0\rangle V)\,d\ell_0\\
&=2c^2\int_{\partial\Omega_0}\left(v_0\dfrac{\partial\widehat{v}_0}{\partial\nu_0}+H(v_0)^2\right)\,d\ell_0,
\end{align*}
and this finishes the proof.

\subsection*{Acknowledgments}
M.C. would like to express his gratitude to Princeton University for its hospitality, especially to Fernando Codá Marques, whose insightful questions inspired this work. This work was carried out while the authors were visiting ICTP - International Centre for Theoretical Physics, as associates. M.C. and I.N. acknowledge support from the ICTP through the Associates Programme (2022-2027 and 2019-2024, respectively). The authors are grateful to the Institute and to Claudio Arezzo for their hospitality. 
We would also like to thank Lucas Ambrozio for suggesting Theorem \ref{topbound2}, and Levi Lima for bringing references \cite{Shimakura2} and \cite{JorgedeLima} to our attention.

The authors were partially supported by the Brazilian National Council for Scientific and Technological Development (CNPq), Grant 405468/2021-0. M.C. was also supported by CNPq under Grant 11136/2023-0 and by the Foundation for Research Support of the State of Alagoas (FAPEAL) under Grant 60030.0000000323/2023.

\bibliographystyle{amsplain}
\bibliography{references} 

\end{document}